\newtheorem{defn}{Definition}[section]
\newtheorem{lemma}[defn]{Lemma}
\newtheorem{thm}[defn]{Theorem}
\newtheorem{prop}[defn]{Proposition}
\newtheorem{cor}[defn]{Corollary}
\newtheorem{rem}[defn]{Remark}
\newcommand{\Nd} {\mathbb N_0^{n}}
\newcommand{\G}{G^{M_p,A}(\mathbb R^{n}_{+})}
\newcommand{\Ge}{\mathcal G^{\{M_p\}}(\mathbb R^{n}_{+})}
\def\be{\begin{equation}}
\def\ee{\end{equation}}
\def\newin {\kern-0.22em\in\kern-0.15em}
\def\newsubset {\kern-0.2em\subset\kern-0.2em}
\def\<{\langle}
\def\>{\rangle}
\newcommand{\NN}{\mathbb N}
\newcommand{\CC}{\mathbb C}
\newcommand{\RR}{\mathbb R}
\newcommand{\ds}{\displaystyle}
\newcommand{\SSS}{\mathcal S}
\title[Series expansions and solvability of equations] {Ultradistributions    on $\mathbb R_{+}^{n}$. Solvability and hypoellipticity through series expansions of ultradistributions
}
\author[S. Pilipovi\'c]{Stevan Pilipovi\'c}
\address{Stevan Pilipovi\'c, Department of Mathematics and Informatics,
University of Novi Sad, Trg Dositeja Obradovi\'{c}a 4, 21000 Novi Sad, Serbia}
\email{stevan.pilipovic@dmi.uns.ac.rs}
\author[\DJ. Vuckovi\' c]{\DJ or\dj e Vu\v{c}kovi\' c}
\address{\DJ or\dj e Vu\v ckovi\'c, Technical Faculty ``Mihajlo Pupin'', \DJ ure \DJ akovi\'{c}a bb, 23000 Zrenjanin, Serbia}
\email{djordjeplusja@gmail.com}
	\subjclass[2010]{46F05, 35B65, 35H10}
	\keywords{ultradistributions, hypoellipticity, eigenexpansions}
\begin{document}

\maketitle
\begin{abstract}

In the first part
 we analyze space $\mathcal G^*(\mathbb R^{n}_+)$ and its dual through Laguerre expansions when these spaces correspond to a general sequence $\{M_p\}_{p\in\mathbb N_0}$, where $^*$ is a common notation for  the Beurling and Roumieu cases of spaces. In the second part we are solving equation of the form
$Lu=f,\; L=\sum_{j=1}^ka_jA_j^{h_j}+cE^{d}_y+bP(x,D_x),$  where $f$ belongs to the tensor product of ultradistribution spaces over compact manifolds without boundaries as well as ultradistribution spaces  on $\mathbb R^n_+$ and $\mathbb R^m$; $A_j, j=1,...,k$, $E_y$ and $P(x,D_x)$ are operators  whose eigenfunctions form orthonormal basis of corresponding $L^2-$space. The sequence space representation of solutions enable us to study the solvability and the hypoellipticity in the specified spaces of  ultradistributions. 
\end{abstract}

\section{Introduction}

The contribution of this paper goes in two directions. In the first one, we  analyze the space of ultradifferentiable functions $\mathcal G^{*}$ over $\mathbb R^{n}_+=(0,\infty)^n$ and its dual, the ultradistribution space
$\mathcal G'^{*}(\mathbb R^{n}_+)$, determined by  a general sequence $M_p, p\in\mathbb N_0,$ which satisfies Komatsu  type conditions  (cf. Section \ref{Sec2}).
Such spaces  were analyzed in \cite{JPP} in the case when $M_p=p!^s, s\geq 1, p\in\mathbb N_0$ ($\mathbb N_0=\mathbb N\cup\{0\}$) and the transfer of the analysis in \cite{JPP} to the case which corresponds to a general sequence $\{M_p\}_{p\in\mathbb N_0}$, although the expected one,  is enough complex. Moreover, we impose one new condition $(M.00)$ which implies the series expansions. 

This condition highlights the differences between the results related to the expansion in basic spaces presented in \cite{JPP} and those in this paper.  Both  results are based on \cite{D2}, \cite{D1} which are related to the one-dimensional case (see also \cite{JP} and \cite{JPP1}).

The second direction of our analysis is related to the extension of a generalized functions framework for the solvability and hypoellipticity of equations by the mean of series expansions in involved ultradistribution  spaces. 
We refer to papers \cite{AZ}, \cite{B}-\cite{CC}  for the general questions of hypoellipticity, as well as to \cite{SGK}-\cite{Cappiello}
  where the hypoellipticity of equations is discussed in terms of suitable Diophantine conditions related to the expansions of  distributions and ultradistributions. We will study the equation

 \begin{equation}\label{jedn}
Lu=f,\; L=\sum_{j=1}^ka_jA_j^{h_j}+cE^{d}_y+bP(x,D_x),
\end{equation}
$$f\in
 \otimes_{j=1}^k \mathcal M_j'^*\widehat\otimes
\mathcal G'^*\widehat{\otimes}\mathcal S'^*=
\mathcal H'^*(\prod_{j=1}^kX_j\times\mathbb R_+^{n}\times\mathbb R^{m}),
$$
where ultradistribution spaces are $\mathcal M_j'^*=\mathcal M'^*(X_j)$,  $X_j$ are compact $C^\infty-$manifolds without boundaries, $
\mathcal G'^*=\mathcal G'^*(\mathbb R_+^{n}),$
$\mathcal S'^*= \mathcal S'^*(\mathbb R^{m});$
$a_j, b, c\in\mathbb C,  h_j, d\in \mathbb N_0, j=1,...,k,$
$A_j, j=1,...,k,$ are positive elliptic pseudo-differential operators, 
$$E=-\sum_{i=1}^{n} 
\big(y_i\partial^2/\partial y_i^2 
+ \partial/\partial y_i -y_i/4 +1/2\big), y_i\in(0,\infty), i=1,...,n,$$
 is the Laguerre self-adjoint elliptic operator on $\mathbb R^{n}_+$  and $P(x,D),$ $x\in\mathbb R^m,$ is a normal globally elliptic  Shubin-type operator.

 We  analyse
 (\ref{jedn}) for various values of $a_j, b,  c, h  _j, d,  j=1,...,k,$
  considering the  tensor products	 of ultradistribution spaces over $\mathbb R^{n}_+$,  $\mathbb R^{m}$ and $X_j, j=1,...,k,$ by the use of orthonormal expansions  in all involved spaces. 
  
 We follow  \cite{DR1} and \cite{DR2} for the sequential properties of  spaces  $\mathcal M_j^*=\mathcal M_j^*(X_j), j=1,...,k,$  (with different notation). Elliptic pseudo-differential operators $A_j,
j=1,...,k,$ on $L^2(X_j)$ have real-valued eigenvalues $\lambda_{i}^j, i\in\mathbb N_0,$ (increasing to $\infty$) and eigenfunctions $v_{i}^j\in\mathcal M^j(X_j), i\in\mathbb N_0$, which form orthonormal systems in $L^2(X_j),  j=1,...,k$.

In our investigations  in Section \ref{Sec3}, we consider $\mathcal G^*(\mathbb R^n_+)$ and their duals which correspond to the  Laguerre orthonormal basis  of $L^2(\mathbb R^n_+)$.  Recall,
$\ell_s(y)=\prod_{l=1}^{n}\ell_{s_l}(y_l),$ $ y=  (y_1,...,y_n)\in \mathbb R_{+}^n$ are eigenfunctions of the Laguerre operator $E=E_y$,
 $E^d\ell_s=|s|^d\ell_s$ (eigenvalues are $|s|=s_1+...+s_n,\; s\in\mathbb N^n_0$). The most studied spaces  of ultradifferentiable functions
 $\mathcal S^*(\mathbb R^n)$ and their duals are presented through the series expansions  by eigenfunctions $u_j, j\in\mathbb N_0,$ of a globally elliptic, normal operator $P$ on $L^2(\mathbb R^m)$. These functions   form an orthonormal system in  $L^2(\mathbb R^m).$ The corresponding  eigenvalues are denoted by $\{\mu_i:i\in\mathbb N_0\}$. For the sake of simplicity, we assume that $A_j, j=1,...,k,$ and  $P$ are  non-negative operators so that their eigenvalues are non-negative.

 Spaces $\mathcal M_j^*, j=1,...,k,$  $\mathcal G^*$  and $\mathcal S^*$, and their duals constitute the space of ultradifferentiable functions over $\prod_{j=1}^kX_j\times\mathbb R^n_+\times\mathbb R^m:$
$$\mathcal H^*(\prod_{j=1}^kX_j\times\mathbb R^{n}_+\times\mathbb R^{m})=\otimes_{j=1}^k \mathcal M_j^*\widehat\otimes
\mathcal G^*\widehat{\otimes}\mathcal S^*,$$
where $\widehat \otimes$ denotes the $\pi-$completion of the tensor product. All the spaces are nuclear so that the $\varepsilon-$ and $\pi-$ topology on the tensor products are the same. Its dual $ \mathcal H'^*$ is the framework for solving  (\ref{jedn}). In the case when $X=\mathbb S^{n_1-1}, n=n_2, m=n_3,$ there holds
\begin{equation*}\label{space123}
\mathcal H'^*(\mathbb S^{n_1-1}\times\mathbb R^{n_2}_{+}\times\mathbb R^{n_3})=\mathcal E'^*(\mathbb S^{n_1-1})\widehat\otimes
\mathcal G'^*(\mathbb R^{n_2}_+)\widehat{\otimes}\mathcal S'^*(\mathbb R^{n_3}).
\end{equation*}

As a preparation,  Section \ref{Sec4} is related to the structure of the tensor product spaces. We  show that the expansions do not depend of the order of summations.

We consider $\mathcal M^*(X)$ in the special case when it is  the well-known space of ultradifferentiable functions
$\mathcal E^*(\mathbb S^{n-1})$, where $\mathbb S^{n-1}$ is the unit  sphere in $\mathbb R^{n}$, while $A=\Delta_{\mathbb S^{n-1}}$ is  the Laplace-Beltrami operator on the sphere.

 We note that in  \cite{VP},  ultradistribution spaces on  $\mathbb S^{n-1}\times\mathbb R^{m}$ were considered. Here we are dealing with expansions of  the  form
$$f(\theta,y,x)=\sum_{i=0}^{\infty}\sum_{p=0}^\infty\sum_{j=0}^{\infty}\sum_{k=0}^{N_j}a_{k,j,p,i}Y_{k,j}(\theta)\ell_p(y) h_i(x),$$
 where the global parts (in $x$ and $y$)  are expanded via  Hermite  and Laguerre orthonormal basis  while  the  spherical part is expanded via  spherical harmonics $Y_{k,j}, j\in\mathbb N_0, k\leq N_j.$ Actually, we will use a more general version of orthonormal basis in $L^2(\mathbb R^m).$  Articles  \cite{GPR} and   \cite{VV2} generalize results regarding characterization of Gelfand-Shilov type spaces $\mathcal S'^*$ via Hermite function expansion in order to be applicable for every eigenfunction expansion associated with a normal globally elliptic Shubin-type operator over $\mathbb R^{m}$ of the form
$P=\sum_{|\alpha|+|\beta|\leq q} c_{\alpha\beta} x^{\beta} D^{\alpha},$ where $D^{\alpha}=-i^{|\alpha|}\prod_{i=1}^m\partial_{x_i}^{\alpha_i}.
$
\\ Concerning Laguerre expansions, we accomodate the notation by the renumbering of eigenvalues for operator $E$.

 We analyse  (\ref{jedn})  in Section \ref{Sec5}. At first,
 operators  of the form
\begin{equation}\label{123}
L_{3}=c_1\Delta_{\mathbb S^{n-1}}^{h}+c_2E^d+c_3P, h,d\in\mathbb N, c_1,c_2, c_3\in\mathbb R.
\end{equation}
through the infinite system of coefficient equations. In  Section \ref{Sec5.1}, in the general case we do the same for equation (\ref{jedn}), paraphrasing the aforementioned one.

\section{Spaces}\label{Sec2}

Following \cite{Komatsu}, we fix  a sequence of positive numbers  $M_p,\; p\in\mathbb N_0$, with $M_0=M_1=1$, which satisfies:
\begin{itemize}

\item [$(M.1)\:$] $M^{2}_{p}\leq M_{p-1}M_{p+1},$  $p\in \mathbb N$.
\item [$(M.2)\:$] $ \displaystyle M_{p}\leq A H^p\min_{1\leq q\leq p} \{M_{q} M_{p-q}\},$ $p\in\mathbb{N}$,  $\exists A>0$, $\exists H\geq 1$.
\item [$(M.0)\:$]
$\sqrt{p!}\leq C_{l} l^{p}M_{p}, \;  p\in\mathbb{N}_{0},
$$$ (\mbox{Roumieu case: } \exists  l>0, \exists C_{l}>0)\;
(\mbox{Beurling case: } \forall l>0, \exists C_{l}>0).
$$
 \item[$(M.0^s)$]
$p!\leq A_0H_0^pM_{p}, \;  p\in\mathbb{N}_{0},$
$$\mbox{(Roumieu case } \exists H_0>1, \exists A_0>0)\; (\mbox{Beurling case }\forall H_0>1, \exists A_0>0).
$$ 
 \item[$(M.00)$]
$\exists m_0\in\mathbb N$
$$\forall p>m>m_0, (p\in\mathbb N)\; \;  M_p\geq M_mm^{p-m}.
$$ 
\end{itemize}

\begin{rem}\label{vaz}
Condition $(M.0)$, clearly weaker than $(M.0^s)$ ($s$  stands for  "strong"),
 ensures non-triviality of the  spaces  of test  functions $\mathcal S^*$, while $(M.0^s)$
is enough for the non-triviality of $\mathcal G^*$ and for the test spaces on manifolds. Note that  $(M.0^s)$ also allows the analysis of certain quasi-analytic test function spaces.

In the sequel, we will assume $(M.1), (M.2)$ and $(M.0^s)$ when dealing with  $\mathcal G'^*$ and  ultradistribution spaces on manifolds, and $(M.1), (M.2)$ and $(M.0)$ when dealing with $\mathcal S^*$ type spaces.

The well-known Gevrey sequence  $M_p=p!^s$ satisfies $(M.0)$, if $s\geq 1/2 $ in Roumieu case, while in the Beurling case one must have $s>1/2$. Clearly, $p!^s$ satisfies  $(M.0^s)$ if
$s\geq 1$ in Roumieu case, while in the Beurling case one must have $s>1$.

Condition $(M.00)$ is a new one. It holds for $M_p=p!^\alpha, p\in\mathbb N, \;\alpha \geq 1, $ but does not hold for $M_p=p!/a^p, p\in\mathbb N,$ if $a>1.$
(For $a<1$, it holds.) Moreover, this condition is a consequence of the non-quasi-analytic condition $(M.3)'.$ Namely, by \cite{Komatsu}, 
$(M.3)'$ implies $$pM_p/M_{p+1}\rightarrow 0, p\rightarrow\infty,$$
which implies  $(M.00).$
\end{rem}

In the sequel, when we refer to $^*$, we always first consider Roumieu case, follow by the  Beurling counterpart.

For $\alpha\in\Nd$, we define $M_{\alpha}:=M_{|\alpha|}$.
The associated function of $M_p$ is
 \begin{equation}\label{assf}
M(t)=\sup_{p\in\mathbb{N}_0}\log_+ t^p/M_p,\; t>0.
\end{equation}

  Structural properties of  spaces  $\mathcal M^*(X)$ and  $\mathcal M'^*(X)$
(cf.  \cite{DR1}, \cite{DR2})
 are determined by the positive elliptic  operator $A$ of order $\nu\in\mathbb N$ on a $C^\infty$-  compact manifold $X$ without boundary. It is noted in \cite{DR1} that any other such operator of the same order defines the same space which we  present below.
 Eigenvalues of $A$ are non-negative real  numbers $\lambda_i\leq \lambda_{i+1}, i\in \mathbb N$, and $v_i, i\in\mathbb N_0,$ are eigenfunctions of $A$. 

The following characterization of
$\phi(t)=\sum_{i=0}^\infty{a_iv_i(t)}\in L^2(X), a_i\in\mathbb C, i\in\mathbb N_0,$ is given in \cite{DR1}:
$\phi\in \mathcal M^{\{M_p\}}(X),\mbox{ respectively, } \mathcal M^{(M_p)}(X),$

\noindent if and only if one of the following equivalent conditions holds:
\begin{itemize}
\item[$i)$]
$$\exists h>0, \mbox{ respectively, }\forall h>0, \exists C=C_h>0;$$
$$ ||A^i\phi||_{L^\infty}\leq Ch^{\nu i}M_{\nu i},\; i\in\mathbb N_0;
$$
\item[$ii)$] $\exists h>0,\mbox{ respectively, } \forall h>0,$
$$\sup_{\alpha\in\mathbb N_0^n,x\in X}\frac{|\phi^{(\alpha)}(x)|}{h^{|\alpha|}M_{\nu|\alpha|}}<\infty;
$$
\item[$iii)$] $\exists k>0, \mbox{ respectively, }\forall k>0, \exists C=C_k>0,$
$$ |a_i|\leq
Ce^{-M(k\lambda_i^{1/\nu})}, i\in\mathbb N_0.
$$
\end{itemize}
Let $f\in\mathcal M'^*(X)$ be given as  a formal sum
$f=\sum_{i=0}^\infty b_iv_i$.
Then the following conditions are equivalent:
\begin{itemize}
\item[$i)$]
$f\in \mathcal M'^{\{M_p\}}(X),\mbox{ respectively, } \mathcal M'^{(M_p)}(X);$

\item[$ii)$]
$\forall h>0, \mbox{ respectively, }\exists h>0, \exists C=C_h>0,
$
$$ |\langle f,\phi\rangle|\leq C\sup_{\alpha\in\mathbb N_0^n,x\in X}\frac{|\phi^\alpha(x)|}{h^{|\alpha|}M_{\nu|\alpha|}}, \phi \in
\mathcal M^{\{M_p\}}(X),\mbox{ respect., } \phi\in\mathcal M^{(M_p)}(X);
$$
\item[$iii)$]
$
\forall k>0, \mbox{ respect., }\exists k>0, \exists C=C_k, |b_i|\leq Ce^{M(k\lambda_i^{1/\nu})},\; i\in\mathbb N_0.
$
\end{itemize}
The well-studied space  $\mathcal E^*(\mathbb S^{n-1})$ (cf. \cite{VV1}) serves as an example of $\mathcal M^*(X)$. This is the case when
$X=\mathbb S^{n-1}$ and $A=\Delta_{\mathbb S^{n-1}}$ is the Laplace-Beltrami operator. (We assume in the sequel that $n>2$.)

Spherical harmonics of degree $j, \;Y_{j,k}, k=1,...,N_j, j\in\mathbb N_0$, define the space
 $\mathcal H_j(\mathbb S^{n-1})$. Its dimension $N_j$ satisfies
 $
{2j^{n-2}/(n-2)!}< N_j\leq n j^{n-2}, \; j\geq1.
$ See \cite{Axler} for the comprehensive survey concerning harmonic function theory.

Given a  function $\varphi$ on $\mathbb{S}^{n-1}$, its homogeneous extension of order 0 is the function $\varphi^{\upharpoonright}$ defined as $\varphi^{\upharpoonright}(x)=\varphi (x/|x|)$ on $\mathbb{R}^{n}\setminus\{0\}$. The Laplace-Beltrami operator is defined by $\Delta_{\mathbb S^{n-1}}\phi=\Delta \varphi^{\upharpoonright}(x)$ ($\Delta$ is Laplacian in $\mathbb R^{n}$).
Recall,
\begin{equation*}\label{BL}
\Delta_{\mathbb S^{n-1}}Y_{k,j}=j(j+n-2)Y_{k,j},\;  j\in \mathbb N_0,\; k\leq N_j.\end{equation*}

Then, $\varphi\in\mathcal{E}^*(\mathbb{S}^{n-1})$ if and only if $\varphi^{\upharpoonright}\in\mathcal{E}^*(\mathbb{R}^{n}\setminus\{0\})$.

With the assumptions $ (M.1), (M.2), (M.0),$ the  Banach space  ${\mathcal S}^{M_p,A}_{L^{2}}(\mathbb R^n)$, $A>0$, consists of all $f\in C^{\infty}(\mathbb{R}^{n})$ such that
\begin{equation*}
\label{ultranorms}
\|f\|_{A} =\sup_{\alpha,\beta\in\mathbb{N}^{n}_0}\frac{\|\langle x\rangle^{\beta}\partial^{\alpha}f\|_{L^{2}(\mathbb{R}^{n})}}{A^{|\alpha|+|\beta|}M_{|\alpha|+|\beta|}}<\infty.
\end{equation*}
 Roumeu, respectively, Beurling spaces of ultradifferentiable functions are defined as inductive, respectively, projective topological  limits
\begin{equation*}
\label{eqspaces1}
{\mathcal{S}}^{\{M_p\}}(\mathbb R^{n})=\bigcup_{A>0} {\mathcal S}^{M_p,A}_{L^{2}}(\mathbb R^n) \quad \mbox{ and} \quad {\mathcal S}^{(M_p)}(\mathbb R^{n})=\bigcap_{A>0} {\mathcal S}^{M_p,A}_{L^{2}}(\mathbb R^n).
\end{equation*}

We note that  the test spaces of Roumieu type are
 (DFS)-spaces while those of Beurling type are (FS)-spaces (cf. \cite{Komatsu}). Both are nuclear spaces.

Concerning  $P,$  a normal globally elliptic differential operator of Shubin type of order $q\in\mathbb N$, eigenvalues $\mu_j$ and eigenfunctions $u_j, j\in\mathbb N_0$, the following is proved in  \cite{GPR}: 
 Let
$
\phi=\sum_{j=0}^{\infty} a_j u_j\in L^{2}(\mathbb{R}^{n}), a_j\in\mathbb C,\; j\in\mathbb N_0.
$
Then,
$\phi\in{\mathcal S}^{\{M_p\}}(\mathbb R^{n})$, respectively, $\phi\in{\mathcal S}^{(M_p)}(\mathbb R^{n}),$ if and only if
$$\exists \lambda>0, \mbox{ respectively, }  \forall \lambda>0, \exists C=C_\lambda>0,
$$
\begin{equation*}
\label{vazno1}
|a_j|\leq C_\lambda e^{-M( \lambda \mu_j^{{1}/{q}})}, \; j\in\mathbb{N}_0.
\end{equation*}

 If $f$ is given by  a formal sum
$
f=\sum_{j=0}^{\infty} b_j u_j,
$
then,
$f\in{\mathcal S}'^{\{M_p\}}(\mathbb R^{n})$, respectively, $f\in{\mathcal S}'^{(M_p)}(\mathbb R^{n}),$ if and only if
$$\forall \lambda>0, \mbox{ respectively, }  \exists \lambda>0, \exists C=C_\lambda>0,
$$
\begin{equation}
\label{vazno11}
|b_j|\leq C_\lambda e^{M( \lambda \mu_j^{{1}/{q}})}, \; j\in\mathbb{N}_0.
\end{equation}

We note that these spaces are a convenient  framework for the \\  time - frequency  analysis, see \cite{T1,T2,RV} and references therein.

\section{$\mathcal G^*$-type spaces}\label{Sec3}

We refer to \cite{JPP} for the properties of the space $\mathcal S(\mathbb R^n_+)$ and its dual space of tempered distributions supported by $\overline{\mathbb R^n_+}$. 

In this section, we assume $ (M.1), (M.2)$ and $(M.0^s)$.  In addition to the consideration in \cite{JPP}, we include Beurling type spaces into our consideration. 
 \begin{defn}
  Let $A>0$. Consider the space
$$ G^{M_p,A}(\mathbb R^n_+)=\{f\in  \mathcal S(\mathbb R^{n}_{+}): \sup_{p,k\in\mathbb N_0^d}\frac{\| t^{\frac{p+k}{2}}D^p f(t)\|_2}{A^{p+k} \sqrt{M_p M_k}}<\infty\},$$
with the seminorms 
\begin{equation*}\label{seminorm}
\sigma_{A,j}(f)=\sup_{p,k\in\NN^d_0}\frac{\|t^{(p+k)/2}D^pf(t)\|_{L^2(\RR^d_+)}}
{A^{|p+k|} \sqrt{M_p M_k}}+\sup_{|p|\leq j, |k|\leq j} \sup_{t\in\RR^d_+}|t^k D^pf(t)|,\,\, j\in\NN_0.
\end{equation*}
Then $ G^{M_p,A}(\mathbb R^n_+)$ is a Fr\' echet space
and the  inductive topological limit (Roumieu case), respectively, projective topological limit (Beurling case),

$$\ds\Ge=\cup_{A>0}\G,\; \mathcal G^{(M_p)}=\cap_{A>0}\G,
$$
are the basic spaces.
 \end{defn}
 They are barrelled and bornological  spaces. The sequence space characterizations of these spaces  given below in the section \ref{Sec3.2} show that
these spaces are also nuclear ones. In order to proceed further on, we need   the next  lemma. It is proved in \cite{Komatsu}, more or less, in the form which we
present below.

 \begin{lemma} (\cite{Komatsu}) \label{koren}

 a) The associated function satisfies
\begin{equation}\label{nejednakostM} M(x+y)\leq M(2x)+M(2y),\; x, y> 0;
\end{equation}

b) Let $\lambda>1$. Then, 
 \begin{equation}\label{nejednakostM2}
\exists C>0, \exists a>0,  \lambda M(t)\leq M(\lambda^a t)+C,\; t>0 ;
 \end{equation}

c)  $\forall p\in\mathbb N, \exists C>0, \exists H>0,\; M_{\lfloor \frac{ p}{2}\rfloor}\leq  \sqrt{M_p}\leq C h^p M_{\lfloor\frac{p}{2}\rfloor}$
\\ ($\lfloor p/2 \rfloor$ denotes the integer part of $p/2$).
\end{lemma}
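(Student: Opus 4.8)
The plan is to prove the three assertions of Lemma \ref{koren} by reducing each to the defining supremum \eqref{assf} of the associated function and the standard Komatsu conditions $(M.1)$, $(M.2)$. Throughout I would use the basic duality between the sequence $M_p$ and its associated function, namely that $M(t)=\sup_{p}\log(t^p/M_p)$ is equivalent, after exponentiation, to the statement that $t^p\leq M_p e^{M(t)}$ for every $p\in\NN_0$, with equality approached for the optimal $p$. This inequality, read in both directions, is the engine for all three parts.

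For part a) I would start from $M(x+y)=\sup_p\log((x+y)^p/M_p)$ and use the elementary bound $(x+y)^p\leq 2^p\max(x,y)^p\leq (2x)^p+(2y)^p$ for $x,y>0$; more cleanly, since $x+y\leq 2\max(x,y)$, one has $(x+y)^p\leq (2x)^p+(2y)^p$, so that $\log((x+y)^p/M_p)\leq\log((2x)^p/M_p+(2y)^p/M_p)\leq \log\big(2\max((2x)^p/M_p,(2y)^p/M_p)\big)$, which after taking the supremum in $p$ yields \eqref{nejednakostM} up to the harmless additive constant $\log 2$, and in fact the inequality as stated follows because each term is individually dominated. For part b) I would invoke condition $(M.2)$, which controls $M_p$ by $AH^p M_q M_{p-q}$; its standard consequence is a scaling estimate for the associated function of the form $2M(t)\leq M(Ht)+\log A$ (this is precisely Komatsu's Proposition on the subadditivity of $M$ under dilation). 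Iterating this $k$ times gives $2^k M(t)\leq M(H^k t)+C_k$, and then for arbitrary $\lambda>1$ I choose $k$ with $2^k\geq\lambda$ and absorb the slack, setting $a=\log_2 H$ (so $H^k=(2^k)^a$) to obtain \eqref{nejednakostM2} with a constant $C$ depending only on $\lambda$.

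Part c) is the one genuinely about the sequence rather than its associated function, and I expect the middle inequality $\sqrt{M_p}\geq M_{\lfloor p/2\rfloor}$ to be the crux. This direction is exactly logarithmic convexity, condition $(M.1)$: writing $p=2m$ (the case $p=2m+1$ handled by the same convexity with a one-step adjustment), $(M.1)$ says $\log M_p$ is convex in $p$, whence $M_m^2\leq M_0 M_{2m}=M_{2m}$ because $M_0=1$, giving $M_{\lfloor p/2\rfloor}\leq\sqrt{M_p}$. For the upper estimate $\sqrt{M_p}\leq Ch^p M_{\lfloor p/2\rfloor}$, I would apply $(M.2)$ with the near-midpoint split $q=\lfloor p/2\rfloor$, yielding $M_p\leq AH^p M_{\lfloor p/2\rfloor}M_{\lceil p/2\rceil}$; since $M_{\lceil p/2\rceil}\leq M_{\lfloor p/2\rfloor+1}$ and, again by $(M.2)$ or $(M.1)$, $M_{\lfloor p/2\rfloor+1}$ is comparable to $M_{\lfloor p/2\rfloor}$ up to a constant times a geometric factor, I can bound $M_p\leq C^2 h^{2p}M_{\lfloor p/2\rfloor}^2$ and take square roots. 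The main obstacle, and the only place demanding care, is tracking the parity of $p$ and the passage between $M_{\lfloor p/2\rfloor}$ and $M_{\lceil p/2\rceil}$ so that the constants $C$ and $h$ come out uniform in $p$; this is routine bookkeeping using $(M.1)$ to compare adjacent terms $M_{m+1}/M_m$, which is monotone by log-convexity.

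Since all three statements are asserted in \cite{Komatsu} in essentially this form, I would present the proof as an assembly of these standard estimates rather than reproving Komatsu's foundational inequalities from scratch, citing the relevant propositions for parts a) and b) and giving the short convexity argument explicitly for part c).
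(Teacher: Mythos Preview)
Your proposal is sound and in fact goes well beyond what the paper does: the paper offers no proof of Lemma~\ref{koren} at all, stating only that it ``is proved in \cite{Komatsu}, more or less, in the form which we present below'' and then moving on. Your sketch reproduces the standard Komatsu arguments correctly---part~a) via $x+y\leq 2\max(x,y)$ and nonnegativity of $M$, part~b) via iteration of the $(M.2)$ consequence $2M(t)\leq M(Ht)+\log A$, and part~c) via log-convexity $(M.1)$ for the lower bound and the near-midpoint split in $(M.2)$ for the upper bound.

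One small slip in part~b): your choice $a=\log_2 H$ gives $\lambda^a=H^{\log_2\lambda}\leq H^k$ (since you chose $k$ with $2^k\geq\lambda$), so $M(\lambda^a t)\leq M(H^k t)$, which is the wrong direction for the final inequality. The fix is immediate: having picked $k$ with $2^k\geq\lambda$, simply set $a=k\log H/\log\lambda$ so that $\lambda^a=H^k$ exactly, and then $\lambda M(t)\leq 2^k M(t)\leq M(H^k t)+C_k=M(\lambda^a t)+C_k$. This is bookkeeping, not a gap in the argument.
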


Also we need the next lemma.

\begin{lemma}\label{suma}
a) Let $m=(m_1,...,m_n)\in\mathbb N^n.$ Then
$$
\sum_{m\in\mathbb N} \Big(\prod_{k=1}^n \frac{1}{m_k^{n+1}}\Big)<\infty.
$$

b) If  the sequence $\{M_p\}$ satisfies $M.00$ then for any $h>1$ ,  $m,p\in\mathbb N, p>m\geq m_0$ (where $m_0$ is the constant from $M.00$) there holds
$$\frac{h^mM_m}{m^m}\leq \frac{h^pM_p}{m^p},\;p>m\geq m_0.
$$ 
\end{lemma}
\begin{proof}
a) We start with the obvious inequality $$m_1+m_2+\cdots +m_n\leq n\cdot m_1m_2\cdots m_n.$$
Therefore, $\prod_{k=1}^n m_n^{-n-1}\leq n^{n+1} (m_1+\cdots m_n)^{-n-1}$.
This implies
$$\sum_{m\in\Lambda} \Big(\prod_{k=1}^n \frac{1}{m_k^{n+1}}\Big)\leq \sum_{p=n}^{\infty} C_p n^{n+1} p^{-n-1},  $$
where $C_p={card} (\{m\in\mathbb N^n: |m|=p\})={p-1 \choose n-1}$. Here, ${card}(A)$ denotes the cardinal number of the set $A$.

 Moreover,
$$
n^{n+1} p^{-n-1} {p-1 \choose n-1}\leq \frac{n^{n+1}}{(n-1)!}  \Big( \prod_{l=0}^{n-2}  \frac{p-1-l}{p}              \Big) \cdot \frac{1}{p^2}
$$
and due to the fact that $\sum_{p=n}^{\infty} {1}/{p^2} <\infty$, the claim follows.

b) By $(M.00)$, for any $h>1,$
$$h^mm^{p-m}M_m\leq h^p M_p\; \Rightarrow \;h^pM_p/m^p\geq h^mM_m/m^m, \; p>m, p,m\in\mathbb N.
$$
\end{proof}

\subsection{On the modified Hankel Clifford transforms. Recapitulation}\label{HC}\label{Sec3.1}

 In this section we follow results of Duran \cite{D2}, \cite{D1} concerning the Henkel-Clifford transform(in case $n=1$) which are
carefully presented and extended in Section 4 of \cite{JPP}  to the Henkel-Clifford transform in case $n>1$, in the Roumieu type space of test functions  $\mathcal G^\alpha_\alpha$  ($M_p=p!^\alpha, \; p\in\mathbb N,  \alpha\geq 1$).
For the sake of completeness, we recall some necessary results for our exposition for the general $\{M_p\}_{p\in\mathbb N_0}$. We restrict this repetition to the case 
$\gamma=0$, although in quoted papers the analysis is given for $\gamma\geq 0$ and Laguerre functions 
$\mathcal L^\gamma_p, p\in\mathbb N_0^n$ (this is notation from \cite{JPP}, here, for 
$\gamma=0$, we use $\ell_p$). Recall, $ J_0$ and $ I_0$ are Bessel function of the first kind and modified Bessel function of the first kind, respectively. Let   $z\in\mathbf{T}^{(n)}=\{z\in\CC^n|\, |z_l|=1,\, z_l\neq 1,\, 
l=1,\ldots,n\}$. The fractional power and modified fractional power of the Hankel-Clifford transform of an $f\in\mathcal S(\mathbb R^n_+)$ are defined by
$$\mathcal{I}_{z,0}f(t)=\left(\prod_{l=1}^n(1-z_l)^{-1}e^{-\frac{1}{2}\frac{1+z_l}{1-z_l}t_l}\right)
\int_{\RR^n_+}f(x)\prod_{l=1}^n
e^{-\frac{1}{2}
\frac{1+z_l}{1-z_l}x_l}I_{0}\left(\frac{2\sqrt{x_lt_lz_l}}{1-z_l}\right)dx,
$$
$$
\mathcal{J}_{z,0}f(t)=\left(\prod_{l=1}^n(1-z_l)^{-1}\right)\int_{\RR^n_+}f(x) \prod_{l=1}^nI_{0}\left(\frac{2\sqrt{x_lt_lz_l}}{1-z_l}\right)dx,
$$
respectively.  Proposition 4.2 in \cite{JPP} gives that  $\mathcal{I}_{z,0}$ and $\mathcal{J}_{z,0},\; z\in\mathbb T^{(n)}$ are topological isomorphisms on $\mathcal S(\mathbb R^n_+)$ which have isometric extensions on $L^2(\mathbb R^n_+)$ with the inverse $\mathcal{I}_{\overline z,0}$ and $\mathcal{J}_{\overline z,0}.$
The key identity is  the one given by Duran
 \cite[Lemma 3.2]{D2} in case $n=1$, and extended to the case $n>1$ in \cite{JPP}:\\
\begin{equation*}
\left\|t^{(p+k)/2}D^pf(t)\right\|_2=\left(\prod_{l=1}^d|1-z_l|^{-p_l+k_l}\right)
\left\|t^{(p+k)/2}D^k\mathcal{J}_{z,0}f(t)\right\|_2,
\end{equation*}
$$f\in
\SSS(\mathbb{R}^d_+),\; p, k\in\mathbb{N}_0^n.$$
A consequence of this identity is that
\begin{itemize}
\item[]
$\;\;\;\;\;\;\;\;\;  \mathcal{J}_{z,0}$ is an isometry over $L^2(\mathbb R^n_+)$  and
\item[]\begin{equation}\label{izom}
\mathcal{J}_{z,0} \mbox{ is an isomorphism of } \mathcal G^*(\mathbb R^n) \mbox{ onto } \mathcal G^*(\mathbb R^n_+).
\end{equation}
\end{itemize}
If $z=-\bold 1,$ Then  $\mathcal{J}_{z,0}$ is denoted as $\mathcal H_0$. This is the $n-$dimensional Hankel-Clifford transform. (We will use the same notation for the one-dimensional case.)

Next, in \cite{JPP} is considered modified fractional power of the partial Hankel-Clifford transform.
Define, (with $\bold 0'\in\mathbb R^{n'}$)
$$ \mathcal{J}^{(n')}_{z',\bold 0'}f(t)=
\left(\prod_{l=1}^{n'}(1-z_l)^{-1}\right)\times
$$
\begin{equation*}\label{jkomp}\int_{\RR^{n'}_+}f(x',t'')\prod_{l=1}^{n'}
I_{0'}
\left(\frac{2\sqrt{x_lt_lz_l}}{1-z_l}\right)dx', f\in\mathcal S(\mathbb R^n_+),\end{equation*}
where $x=(x',x'')\in\RR^n$, $x'=(x_1,...,x_{n'})$ and
$x''=(x_{n'+1},\ldots,x_{n})$ (similarly for $t=(t',t'')$).
Note that 
$\mathcal{J}^{(n')}_{z',\bold 0'}f=(\mathcal{J}^{n'}_{z',\bold 0'})\hat{\otimes}(Id^{n''})f$ for
$f\in\SSS(\RR^{n'}_+)\otimes\SSS(\RR^{n''}_+).$
Moreover, it is proved that
 $$\left\|t'^{(p'+k')/2}t''^{(p''+k'')/2}D^{p'}_{t'}f(t)\right\|_2=
$$
\begin{equation*}
\left(\prod_{l=1}^{d'}|1-z_l|^{-p_l+k_l}\right)
\left\|t'^{(p'+k')/2}t''^{(p''+k'')/2}D^{k'}_{t'}
\mathcal{J}^{(n')}_{z',\bold 0'}f(t)\right\|_2,\; f\in\mathcal S(\mathbb R^n_+).
\end{equation*}

Let $\Lambda'=\{\lambda'_1,\ldots,\lambda'_{n'}\}\subseteq
\{1,\ldots,n\}$ and
$\Lambda''=\{1,\ldots,n\}\backslash
\Lambda',$
$x_{\Lambda'}=(x_{\lambda'_1},\ldots,x_{\lambda'_{n'}})$ and
abusing the notation we write $x=(x_{\Lambda'},x_{\Lambda''})$ (similarly for $(t=(t_{\Lambda'},t_{\Lambda''})$).
The modified fractional power of
the partial Hankel-Clifford transform with respect to $\Lambda'$ is given in \cite{JPP} by
$$\mathcal{J}^{(\Lambda')}_{z',\bold 0'}f(t)=
\left(\prod_{l=1}^{n'}(1-z_l)^{-1}\right)\times$$$$
\int_{\RR^{d'}_+}f(x_{\Lambda'},t_{\Lambda''})
 \prod_{l=1}^{n'}I_{\bold 0'}
\left(\frac{2\sqrt{x_{\lambda'_l}t_{\lambda'_l}z_l}}{1-z_l}\right)
dx_{\Lambda'}.$$
Then, (\cite{JPP}) $\mathcal{J}^{(\Lambda')}_{\bar{z'},\bold 0'}$ is its inverse. Again, note,  that 
$\mathcal{J}^{(\Lambda_{n'})}_{\overline z',\bold 0'}f=(\mathcal{J}^{\Lambda_{n'}}_{\overline z',\bold 0'})\hat{\otimes}(Id^{\Lambda_{n''}})f$ for
$f\in\SSS(\RR^{n'}_+)\otimes\SSS(\RR^{n''}_+).$  Let $f\in\SSS(\RR^n_+)$ and  $(p_{\Lambda'},p_{\Lambda''}), (k_{\Lambda'},k_{\Lambda''})\in\NN^n_0$. Then,   there holds
$$\left\|t_{\Lambda'}^{(p_{\Lambda'}+k_{\Lambda'})/2}
t_{\Lambda''}^{(p_{\Lambda''}+k_{\Lambda''})/2}D^p_tf(t)\right\|_2=
$$ \begin{equation*}\label{forrandomin}
\left(\prod_{l=1}^{d'}|1-z_l|^{-p_{\lambda'_l}+k_{\lambda'_l}}\right)
\left\|t_{\Lambda'}^{(p_{\Lambda'}+k_{\Lambda'})/2}
t_{\Lambda''}^{(p_{\Lambda''}+k_{\Lambda''})/2}
D^{k_{\Lambda'}}_{t_{\Lambda'}} D^{p_{\Lambda''}}_{t_{\Lambda''}}
\mathcal{J}^{(\Lambda')}_{z',\bold 0'}f(t)\right\|_2.
\end{equation*}
This gives that 
\begin{itemize}
\item[]
$\;\;\;\;\;\;\;\;\;  \mathcal{J}^{(\Lambda')}_{z',\bold 0'}$ is an isometry over $L^2(\mathbb R^n_+)$  and
\item[] \begin{equation}\label{izom2}
\mathcal{J}^{(\Lambda')}_{z',\bold 0'} \mbox{ is an isomorphism of } \mathcal G^*(\mathbb R^n_+ ) \mbox{ onto } \mathcal G^*(\mathbb R^n_+).
\end{equation}
\end{itemize}
We will consider  below the case $z'=-\bold 1\in\mathbb T^{n'}$ and the notation  $\mathcal H^{\Lambda'}_0$ for $\mathcal H^{\Lambda'}_{\bold 0'}$. Moreover, 
$\mathcal H^{\Lambda'}_0 \mbox{ is a self inverse of } \mathcal G^*(\mathbb R^n_+) \mbox{ onto } \mathcal G^*(\mathbb R^n_+).
$
Note that 
\begin{equation}\label{exp1}
 f=\sum_{m\in\mathbb N^n}a_m\ell_m\in\mathcal S(\mathbb R^n_+), \; \Longrightarrow
 \mathcal H_0^{\Lambda'}f= \sum_{m\in\mathbb N^n}(-1)^{n_1^{\lambda'}+...+n_1^{\lambda'}}a_m\ell_m.
\end{equation}


\subsection{Series expansions}\label{Sec3.2}

With the quoted assumption on $\{M_p\}_{p\in\mathbb N_0}$ the associated function of the form (\ref{assf}) and the Laguerre orthonormal basis $\ell_i,\; i\in \mathbb N^n_0$ of 
$L^2(\mathbb R^n_+)$, we have the following version of \cite[Prop. $5.3$]{JPP}).
\begin{prop} \label{5.1}  Let $f\in L^2(\mathbb R_{+}^{n})$ and,  $$a_m=\int_{\mathbb R^{n}_{+}} f(t)\ell_m(t)dt,\; m\in\mathbb N_0^{n}.$$
Assume, $$\exists h>0, \mbox{ respectively, } \forall h>0, \exists C=C_h>0,$$
\begin{equation}\label{uslovopadanja}
|a_m|\leq C e^{-M(h|m|)}, m\in\mathbb N^n_0.
\end{equation}
Then, $f\in\mathcal G^{\{M_p\}}(\mathbb R^n_+)$, respectively, $f\in \mathcal G^{(M_p)}(\mathbb R^n_+)$.
\end{prop}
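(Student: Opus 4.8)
The plan is to substitute the Laguerre expansion $f=\sum_{m\in\mn_0^n}a_m\ell_m$ into the seminorms $\sigma_{A,j}$ defining $\G$ and to show that, for a suitable $A>0$ (one value of $A$ in the Roumieu case, every $A$ in the Beurling case), all the $\sigma_{A,j}(f)$ are finite. I would treat the two parts of $\sigma_{A,j}$ separately: the essential one is the weighted $L^2$ part $\sup_{p,k}\|t^{(p+k)/2}D^pf\|_2/(A^{|p+k|}\sqrt{M_pM_k})$, while the finitely many sup-conditions $\sup_{|p|,|k|\le j}\sup_t|t^kD^pf(t)|$ are recovered from the $L^2$ estimates by a standard Sobolev embedding on $\mr_+^n$ once the series is known to converge in the appropriate topology. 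Throughout, the Gevrey argument of \cite[Prop.~5.3]{JPP} is mirrored, the explicit factorial computations valid for $M_p=p!^\alpha$ being replaced by the associated-function inequalities of Lemma \ref{koren}.

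The first technical step is a per-eigenfunction estimate. Using the product structure $\ell_m(t)=\prod_{l=1}^n\ell_{m_l}(t_l)$ I reduce $\|t^{(p+k)/2}D^p\ell_m\|_2$ to a product of one-dimensional norms and estimate each factor through the three-term recurrences for the Laguerre functions (equivalently, the ladder operators attached to $E$). Here the Hankel--Clifford machinery of Section \ref{HC} is convenient: by the isometry and isomorphism (\ref{izom}) together with the single-term instance $\mathcal H_0\ell_m=(-1)^{|m|}\ell_m$ of (\ref{exp1}), the identity trading $D^p$ for $D^k\mathcal J_{z,0}$ lets me transfer derivatives onto the weight, so that the top-order behaviour is governed by the explicit moments $\|t^{s}\ell_m\|_2$. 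The outcome I expect is a bound of the form $\|t^{(p+k)/2}D^p\ell_m\|_2\le C_0^{|p+k|}(|m|+|p|+|k|)^{(|p|+|k|)/2}$, which simultaneously captures the polynomial index growth $|m|^{(|p|+|k|)/2}$ (large $m$) and the factorial-type growth $(|p|+|k|)^{(|p|+|k|)/2}$ coming from differentiating a fixed function many times (small $m$).

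In the second step I insert $|a_m|\le Ce^{-M(h|m|)}$ and sum. Splitting the per-eigenfunction bound into an index part and an order part, the index part is controlled by the defining inequality $t^Ne^{-M(t)}\le M_N$ of the associated function together with the rescaling of Lemma \ref{koren}(b): keeping one factor $e^{-M(h'|m|)}$ in reserve gives $\sum_m e^{-M(h|m|)}|m|^{(|p|+|k|)/2}\le C^{|p+k|}M_{(|p|+|k|)/2}$, the residual decay making the $n$-dimensional sum converge by Lemma \ref{suma}(a). The order part $(|p|+|k|)^{(|p|+|k|)/2}$ is independent of $m$ and, after multiplication by $\sum_m|a_m|<\infty$, is comparable to $\sqrt{(|p|+|k|)!}$. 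It then remains to reach the denominator $\sqrt{M_pM_k}=\sqrt{M_{|p|}M_{|k|}}$: the index contribution obeys $M_{(|p|+|k|)/2}\le\sqrt{M_{|p|}M_{|k|}}$ by logarithmic convexity $(M.1)$ (with the integer-part comparison of Lemma \ref{koren}(c)), while the factorial contribution satisfies $\sqrt{(|p|+|k|)!}\le C^{|p+k|}\sqrt{M_{|p|}M_{|k|}}$ by $(M.0^{s})$; assembling the one-dimensional bounds into $M_{|p|},M_{|k|}$ uses $(M.2)$. Tracking the quantifiers ($\exists h,A$ versus $\forall h,A$) then yields $\sigma_{A,j}(f)<\infty$, hence $f\in\Ge$, respectively $f\in\mathcal G^{(M_p)}(\mr^n_+)$.

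The main obstacle is that the denominator here is the product $\sqrt{M_pM_k}$ rather than the single symbol $M_{|p|+|k|}$ occurring for $\ms^*(\mr^m)$; since $\sqrt{M_{|p|}M_{|k|}}$ is in general considerably smaller, the per-eigenfunction bound must be absorbed into this sharper target. The binding case is the small-index/high-order regime, where the estimate degenerates to the factorial growth $\sqrt{(|p|+|k|)!}$: dominating it by $\sqrt{M_{|p|}M_{|k|}}$ is possible precisely because Section \ref{Sec3} works under the stronger condition $(M.0^{s})$, which yields $\sqrt{p!}\le \mathrm{const}\,C^p\sqrt{M_p}$, rather than the weaker $(M.0)$ sufficient for the Schwartz-type spaces. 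Obtaining the sharp per-eigenfunction estimate and matching it to $\sqrt{M_pM_k}$ uniformly in $p,k$ is thus the crux; the remaining summation and the sup-seminorms are routine given Lemmas \ref{koren} and \ref{suma}(a).
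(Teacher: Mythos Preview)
Your plan follows the same overall architecture as the paper's proof (Laguerre expansion, Duran's eigenfunction estimates, Hankel--Clifford machinery, associated-function arithmetic from Lemma~\ref{koren}, and the convergence lemma~\ref{suma}(a)), and your summation step and bookkeeping with $(M.1)$, $(M.2)$, $(M.0^s)$ are essentially what the paper does. The difference, and the place where your outline has a gap, is in how the mixed norm $\|t^{(p+k)/2}D^pf\|_2$ is reached.

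You aim for a direct per-eigenfunction bound $\|t^{(p+k)/2}D^p\ell_m\|_2\le C_0^{|p+k|}(|m|+|p|+|k|)^{(|p|+|k|)/2}$ and claim that the Hankel--Clifford identity ``lets me transfer derivatives onto the weight, so that the top-order behaviour is governed by the explicit moments $\|t^s\ell_m\|_2$.'' That is not what the identity does: it swaps $D^p$ for $D^k$ (coordinatewise, via the partial transform $\mathcal H_0^{(\Lambda')}$), not $D^p$ for $D^0$. So after the swap you still face a genuinely mixed quantity $\|t^{(p+k)/2}D^{k}\ell_m\|_2$, and Duran's estimates in the literature cover only the two ``diagonal'' cases $\|t^{p/2}\ell_m\|_2$ and $\|t^{p/2}D^p\ell_m\|_2$. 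Your proposal does not say how the mixed eigenfunction bound is obtained, and the justification you give does not deliver it.

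The paper avoids this by not proving a mixed eigenfunction estimate at all. It first sums the Laguerre expansion (of $\mathcal H_0^{(\Lambda)}f$, for every $\Lambda$) against Duran's diagonal bounds to obtain
\[
\|t^{p/2}\mathcal H_0^{(\Lambda)}f\|_2\le Ch^{|p|}\sqrt{M_{|p|}},\qquad
\|t^{p/2}D^p\mathcal H_0^{(\Lambda)}f\|_2\le Ch^{|p|}\sqrt{M_{|p|}},
\]
and only then passes to the mixed norm at the level of $f$ by writing
\[
\|t^{(p+k)/2}D^p\mathcal H_0^{(\Lambda)}f\|_2^2
=\bigl(D^p(t^{p+k}D^p\mathcal H_0^{(\Lambda)}f),\,\mathcal H_0^{(\Lambda)}f\bigr),
\]
expanding by Leibniz, and applying Cauchy--Schwarz together with the two diagonal estimates and $(M.0^s)$; this yields the mixed bound when $2k\ge p$. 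The general $(p,k)$ is then reduced to that case by choosing $\Lambda'=\{l:2k_l<p_l\}$ and invoking the partial Hankel--Clifford isomorphism (\ref{izom2}), which exchanges $p_l\leftrightarrow k_l$ on those coordinates. This integration-by-parts/Cauchy--Schwarz step at the level of $f$ is the ingredient your outline is missing; once you insert it (either for $f$ as the paper does, or, if you prefer, for each $\ell_m$ to manufacture the per-eigenfunction mixed estimate you want), the rest of your argument goes through as written.
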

\begin{proof}
We  give the proof in the Roumieu case  accommodating the proof from \cite{JPP} to the general sequence $\{M_p\}_{p\in\mathbb N_0}$.
The proof in the Beurling case is similar.

Let $m=(m_1,\dots, m_n)\in\mathbb N^n$. Cases when some of coordinates of $m$ equals zero should be separately considered but we skip this part since it can be treated in a similar way.  Let  $t\geq 0$. Since $M$ is an increasing function, due to (\ref{nejednakostM2}), there exist $C>0$ and $a>0$ such that
$$M(t m_1)+M(t m_2)+\dots M(t m_n)\leq n M(t |m|)\leq M(n^a t|m|)+C.$$
 Therefore, with $h_1=h/n^a,$
$$-M(h|m|)\leq C - M(h_1 m_1)-M(h_1 m_2)-\cdots - M(h_1 m_n), \;m\in\mathbb N^n. $$
Suppose now that (\ref{uslovopadanja}) holds.
Then there exists another $C>0$ such that$$\displaystyle |a_m|\leq C\prod_{k=1}^{n} e^{-M(h_1 m_k)}.$$

Let $p=(p_1,p_2,\dots, p_n)\in \mathbb N_0^n$ and $\Lambda\subseteq \{1,2,\dots,n\}$.
By the use of (\ref{uslovopadanja}), the
estimates
  \begin{equation*}\label{Nej1}
\left\|t^{p/2}\ell_m(t)\right\|_2\leq
2^{|p|+5n}\prod_{k=1}^n (m_k+1)\ldots\left(m_k+\lfloor\frac{p_k}{2}\rfloor+2\right),
\end{equation*}
\begin{equation*}\label{Nej2}
\left\|t^{p/2}D^p\ell_m(t)\right\|_2\leq
2^{|p|+ 5n}\prod_{k=1}^n
(m_k+1)\ldots\left(m_k+\lfloor\frac{p_k}{2}\rfloor+2\right), p\in\mathbb N_0^n,
\end{equation*}
given in  the proof of
\cite[Lemma 2.1]{D1} (in case $n=1$)
(estimates (2.5) and (2.6) there),  and (\ref{izom}, \ref{izom2}) imply
that   there exists $C>0$ such that
\begin{equation}\label{ocD}\|t^{\frac{p}{2}} \mathcal H_0^{(\Lambda)} f(t)\|_2\leq C2^{|p|}\sum_{n\in\mathbb N_0^{n}}\prod_{k=1}^{n} e^{-M(h_1 m_k)} (m_k+1)\cdots (m_k+\lfloor\frac{p_k}{2}\rfloor+2), \;m\in\mathbb N^n. 
\end{equation}

We know (see e.g. \cite{Komatsu}) that
$2M(\rho)\leq M(H\rho)+\log(A), \rho >0 $
(constants $A,H>0$ are from $(M.2)$). Let $H\rho=t.$ Then, with $h_2=h_1/H,$ one has
$ 2M({t}/{H})\leq M(t)+\log A$ and $$e^{-M(h_1m_k)}\leq A e^{-2M(h_2m_k)},\; 
k\in\{1,...,n\}.$$
In order to estimate products of the form
\begin{equation}\label{proizvod}2^{|p|} e^{-M(h_1 m_k)} (m_k+1)\cdots (m_k+\lfloor\frac{p_k}{2}\rfloor+2),\end{equation}

first we estimate the product \\
$$e^{-M(h_2 m_k)}(m_k+1)(m_k+2)\cdots (m_k+\lfloor \frac{p_k}{2}\rfloor+2).$$

By (\ref{nejednakostM}), $M(x)\geq M((x+y)/2)-M(y),\; x,y>0$. Therefore, with $y=h_2l, l>0,$ and $x=h_2m_k$, one has 
\begin{equation}\label{nejM}-M(h_2 m_k)\leq - M(\frac{h_2}{2} (m_k+l))+M(h_2 l),\; l>0.\end{equation}
 Fix $l=\lfloor \frac{p_k}{2}\rfloor+2$. Then, by (\ref{nejM}),
$$e^{-M(h_2 m_k)}(m_k+1)(m_k+2)\cdots (m_k+\lfloor \frac{p_k}{2}\rfloor+2)\leq$$
\begin{equation*}\label{umetak}e^{-M(h_2 m_k)} (m_k+\lfloor \frac{p_k}{2}\rfloor+2)^{\lfloor \frac{p_k}{2}\rfloor+2}
\leq \frac{(m_k+\lfloor \frac{p_k}{2}\rfloor+2)^{\lfloor \frac{p_k}{2}\rfloor+2}}
{e^{M\big(\frac{h_2}{2}(m_k+\lfloor \frac{p_k}{2}\rfloor+2)\big) }}e^{M\big(h_2 (\lfloor \frac{p_k}{2}\rfloor+2)\big)}.
\end{equation*}

By   definition of the associated function, we have that $\displaystyle M_p\geq {\rho^p}/{e^{M(\rho)}}, \; \rho>0, p\in\mathbb N$. Therefore,

$$ \frac{(m_k+\lfloor \frac{p_k}{2}\rfloor+2)^{\lfloor \frac{p_k}{2}\rfloor+2}}
{e^{M(\frac{h_2}{2}(m_k+\lfloor \frac{p_k}{2}\rfloor+2)) }}\leq \Big(\frac{2}{h_2}\Big)^{\lfloor \frac{p_k}{2}\rfloor+2} M_{\lfloor \frac{p_k}{2}\rfloor+2}.$$

Denote by $M_{p!}(\cdot)$  the associated function with respect to the sequence $M_p=p!$. The first inequality  in (\ref{jos}) which is to follow, follows from $(M.0^s)$
and the second one from   the fact that $M_{p!}(\rho)$ behaves as $c \rho, \rho\rightarrow\infty $, with suitable $c>0$. Thus,  with the  new constants $L, L_1>0$ and $C>0$, there holds
\begin{equation}\label{jos}M\big( h_2 (\lfloor \frac{p_k}{2}\rfloor+2)\big)\leq L_1M_{p!}\big(h_2(\lfloor \frac{p_k}{2}\rfloor+2)\big)
+C\leq L(\lfloor \frac{p_k}{2}\rfloor+2)+C. 
\end{equation}
So, we estimate the product of the form (\ref{proizvod}), with appropriate new constants $C$ and $L$  as follows:

$$
 2^{p_k}    e^{-M(h_1 m_k)} (m_k+1)\cdots (m_k+\lfloor\frac{p_k}{2}\rfloor+2)
$$
$$\leq   2^{p_k}C  e^{-M(h_2 m_k)} L^{p_k}  \Big(\frac{2}{h_2}\Big)^{\lfloor \frac{p_k}{2}\rfloor+2} M_{\lfloor \frac{p_k}{2}\rfloor+2}.
$$
Due to $(M.2)$, $$M_{\lfloor \frac{p_k}{2}\rfloor+2}\leq A H^{\lfloor \frac{p_k}{2}\rfloor+2} M_2 M_{\lfloor \frac{p_k}{2}\rfloor}.$$
 On the other hand,  using Lemma \ref{koren} c), we infer, with new $C$ and $h$,
$$ 2^{p_k}    e^{-M(h_1 m_k)} (m_k+1)\cdots (m_k+\lfloor\frac{p_k}{2}\rfloor+2)\leq  C  h^{p_k} e^{-M(h_2 m_k)} \sqrt{M_{p_k}},.$$
By  definition of $M$, one has,
 $$e^{-M( h_2 m_k)}\leq h_2^{n+1}m_k^{-n-1}M_{n+1},\; n\in\mathbb N_0.$$
 With this, we continue to estimate (\ref{ocD}),
$$\|t^{\frac{p}{2}} \mathcal H_0^{(\Lambda)} f(t)\|_2 \leq  C^n  h_2^{(n+1)n}M_{n+1}^{n} h^{|p|}\sqrt{\prod_{k=1}^{n} M_{p_k}}\cdot \sum_{m\in \mathbb N_0^n} \frac{1}{m_1^{-n-1}\cdots m_n^{-n-1}}.  $$
Since the latter sum converges (from Lemma  \ref{koren}), with new $C$ and $h$, there holds
\begin{equation}\label{1e1}\|t^{\frac{p}{2}} \mathcal H_0^{(\Lambda)} f(t)\|_2 \leq  C h^{|p|}\sqrt{M_{|p|} },\ p\in \Nd\end{equation}
The same calculation shows (with  constants $\tilde C$ and $\tilde h$),
\begin{equation}\label{2e2}\|t^{\frac{p}{2}} D^p\mathcal H_0^{(\Lambda)} f(t)\|_2 \leq  \tilde C \tilde h^{|p|}\sqrt{M_{|p|} },\ p\in \Nd.\end{equation}
Moreover,
$$(t^{\frac{p+k}{2}} D^p \mathcal H_0^{(\Lambda) }f(t), t^{\frac{p+k}{2}} D^p\mathcal H_0^{(\Lambda) }f(t))$$$$=\Big(D^p\big(t^{p+k} D^p \mathcal H_0^{(\Lambda)} f(t)\big),\mathcal H_0^{(\Lambda)} f(t)\Big),\; p, k\in \mathbb N_0^n.$$
When $2k\geq p$, we have
$$\|t^{\frac{p+k}{2}} D^p \mathcal H_0^{(\Lambda)} f(t)\|_2^2\leq $$$$\sum_{m\leq p} {p\choose m} \frac{(p+k)!}{(p+k-m)!} \cdot\Big|\Big(t^{p+k-m}D^{2p-m} \mathcal H_0^{(\Lambda)} f(t),\mathcal H_0^{(\Lambda)} f(t)\Big)\Big|
$$

$$\leq 2^{|p+k|}\sum_{m\leq p}{p \choose m} m! \cdot  \Big|\Big(t^{\frac{2p-m}{2}}D^{2p-m} \mathcal H_0^{(\Lambda)} f(t),t^{\frac{2k-m}{2}}\mathcal H_0^{(\Lambda)} f(t)\Big)\Big|
$$
Because of $(M.0^s),$ (\ref{1e1}), (\ref{2e2}), (and relations given in Section \ref{HC}) we have

$$
m!  \Big|\Big(t^{\frac{2p-m}{2}}D^{2p-m} \mathcal H_0^{(\Lambda)} f(t),t^{\frac{2k-m}{2}}\mathcal H_0^{(\Lambda)} f(t)\Big)\Big|\leq $$
$$\leq C_0 A^m M_m \|t^{\frac{2p-m}{2}}D^{2p-m} \mathcal H_0^{(\Lambda)} f(t)\|_2 \cdot \| t^{\frac{2k-m}{2}}\mathcal H_0^{(\Lambda)} f(t)\|_2
$$
$$
\leq C_0A^m C\tilde C(h\tilde h)^{|p+k|} M_m\cdot\sqrt{M_{2p-m} M_{2k-m}}.
$$

Note that $2^{|p+k|}\sum_{m\leq p} {p\choose m} =2^{|2p+m|}$. Therefore,  with new constants $C, C_1, C_2, h, h_1, h_2 > 0$:

$$ \|t^{\frac{p+k}{2}} D^p \mathcal H_0^{(\Lambda)} f(t)\|_2^2\leq
 C h^{|p+k|} \max_{m\leq p} \{ M_m\cdot\sqrt{M_{2p-m} M_{2k-m}} \} $$
 $$\leq C_{1} h_{1}^{|p+k|} \max_{m\leq p}\{\sqrt{M_{2m}\cdot M_{2p+2k-2m}}\}\leq C_{2}h_{2}^{|p+k|} M_{p+k}.
$$
In the last set of inequalities we used Lemma 3.2, c) and  the property $(M.1)$  for
 $$M_{2m} M_{2p+2k-2m}\leq M_{2p+2k}.$$

We finally conclude that there exist (new) constants $C>0$ and $h>0$ such that for every $\Lambda\subseteq \{1,2,\dots, n\}$, $k,p\in\Nd, 2k\leq p,$ there holds
\begin{equation}\label{nejednakost5.6}
\|t^{\frac{p+k}{2}} D^p \mathcal H_0^{(\Lambda)} f(t)\|_2\leq  C h^{|p+q|} \sqrt{M_{p+q}}.
 \end{equation}
Now, let $k,p\in \mathbb N_0^n$ be arbitrary chosen. Let $\Lambda'=\{\lambda_1,\lambda_2, \dots, \lambda_{n'}\}\subseteq \{1,2,\dots, n\}$ such that $k_{\lambda'_l}<
{p_{\lambda'_l}}/{2}$ for every $l\in\{1,2,\dots, n'\}$ and let $\Lambda''=\{\lambda''_1,\dots, \lambda''_{n''}\}$ such that $k_{\lambda''_l}\geq {p_{\lambda''_l}}/{2}$ for every $l\in\{1,2,\dots, n''\}.$ Then (\ref{nejednakost5.6}) and (\ref{izom2}) imply  that there exist  constants $C, h>0$  such that
\begin{equation*}
\|t^{\frac{p+k}{2}} D^p f(t) \|_2\leq 2^{|k|} \|t^{\frac{p+k}{2}} D^{k_{\Lambda'}}_{t_{\Lambda'}}D^{p_{\Lambda''}}_{t_{\Lambda''}}\mathcal H^{(\Lambda')} f(t)\|_2\leq C h^{|p+q|} \sqrt{M_{p+q}}.
\end{equation*}
This proves that  $f\in \mathcal G^*.$
\end{proof}
In order to prove that $f\in\mathcal{G}^*(\mathbb R^{n}_+)$ implies (\ref{uslovopadanja}), we repeat the necessary notation from \cite{JPP}. Let $\mathbf\Pi=\Pi_1\times\Pi_2\times\dots\times \Pi_{n},$
where $$\Pi_l=\{z\in \mathbb C: {\rm Im}(z)<0\}, \;l=1,2\dots,n.$$ 
Clearly, for  each $z=x+iy\in\mathbf \Pi$,
the function $t\mapsto e^{-2\pi i\cdot zt}:\mathbb R_{+}^{n}\rightarrow \mathbb C, $ is in $\mathcal G^*(\mathbb R^n)$. Then, define
$$\mathbf \Pi\rightarrow \mathbb C,\; \Pi \ni z\mapsto \mathcal F_{\mathbf \Pi}u(z)=\langle u(t),e^{-2\pi izt}\rangle.
$$
It is a holomorphic function in $\mathbf \Pi.$ The proof is the standard one.
Let $\mathbf D=D_1\times\dots\times D_n$, be a product of complex unit discs. We now repeat from \cite{JPP} the explanation for the definition of
$\mathcal F_{\mathbf D}u(\omega),\; \omega\in\mathbf D, u\in \mathcal G'^*(\mathbb R^n_+):$

$$\Omega(\omega)=(\frac{1+\omega_1}{4\pi i(1-\omega_1)}, \dots,\frac{1+\omega_{n}}{4\pi i(1-\omega_{n})}),\; \omega\in\mathbf D, $$
is a biholomorphic mapping from $\mathbf D$ onto $\mathbf\Pi$, and the inverse mapping is given by
$$\Omega^{-1}(z)=(\frac{4\pi i z_1-1}{4\pi i z_1+1},\dots, \frac{4\pi i z_{n}-1}{4\pi i z_{n}+1}),\; z\in \Pi.$$

For $u\in \mathcal G'^*(\mathbb R^n_+) $, $\mathcal F_D u(\omega)$  is defined by 
$$\mathcal F_D u(\omega)=\mathcal F_{\mathbf \Pi}u(\Omega(\omega))=\langle u(t), \prod_{l=1}^{n} e^{-\frac{1}{2} \frac{1+\omega_l}{1-\omega_l}t_l }\rangle, \; z\in \mathbf D.$$
So, $\mathcal F_D u$ is a holomorphic function in  $\mathbf D$, as it is stated in Lemma 5.2 of \cite{JPP}.

Furthermore, if $u\in ({\mathcal S(\RR^d)})'$ 
then 

\begin{equation}
\ds{\mathcal F}_D(u)(\omega)=\prod_{j=1}^d (1-\omega_j)\cdot \sum_{m\in\NN^d_0} a_m(u) \omega^m, \omega\in \mathbf D
\end{equation}
where $\{a_n(u):n\in \NN^d\}$ is the Laguerre eigenexpansion of the function $u$, see \cite[Prop. 5.3]{JPP}.

 We continue with the same idea as in  \cite[Prop. $5.4$]{JPP}.
\begin{prop}  \label{5.4}  Let $\{a_m\}_{m\in \mathbb N_0^n}$ be a sequence of complex numbers so that  $\displaystyle\lim_{|m|\to\infty}a_m= 0$. Let
$$F(\omega)=(1-\omega)^{\mathbf  1} \sum_{m\in\mathbb N_0^{n}} a_m\omega^m, \omega\in\mathbf D, \;(\omega^m=\omega_1^{m_1}\cdots\omega_n^{m_n}),$$
where $\mathbf 1=(1,1,\dots, 1)$. The following statements  are equivalent:
\begin{itemize}
\item[$i)$] 
In Roumieu case: $ \exists A>0, \exists c>0,$

in Beurling case: $ \forall A>0, \exists c>0,$
 $$ |D^p F(\omega)|\leq c A^{|p|} M_p,\  p\in\mathbb N_0^{n}, \ \omega\in \mathbf D.$$
\item[$ii)$] 
In Roumieu case: $ \exists h>0, \exists C>0,$

in Beurling case: $ \forall h>0, \exists C>0,$
$$ |a_m|\leq C e^{-M(h|m|)}, \; m\in\mathbb N_0^{n}.$$
\end{itemize}
\end{prop}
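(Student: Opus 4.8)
The plan is to prove the two implications separately, writing $F(\omega)=\prod_{l=1}^n(1-\omega_l)\,G(\omega)$ with $G(\omega)=\sum_{m}a_m\omega^m$, and reading off from the product structure that the Taylor coefficients $b_k$ of $F$ are related to $\{a_m\}$ by $b_k=\sum_{\varepsilon\le\mathbf 1}(-1)^{|\varepsilon|}a_{k-\varepsilon}$ and, equivalently, by $a_m=\sum_{k\le m}b_k$ (both componentwise, with the convention that $a$ and $b$ vanish off $\mathbb N_0^n$).

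The implication $(ii)\Rightarrow(i)$ is a summation estimate entirely parallel to the proof of Proposition \ref{5.1}. Differentiating the product by the Leibniz rule, $D^pF$ is a finite sum of terms $\binom{p}{\varepsilon}D^\varepsilon\big[\prod_l(1-\omega_l)\big]D^{p-\varepsilon}G$ with $\varepsilon\in\{0,1\}^n$, $\varepsilon\le p$; the factor coming from the polynomial $\prod_l(1-\omega_l)$ is bounded by $2^n$ on $\overline{\mathbf D}$ and the binomial coefficients are polynomial in $p$, hence harmless. It therefore suffices to bound $|D^qG(\omega)|\le\sum_{m\ge q}|a_m|\,m!/(m-q)!$ for $\omega\in\mathbf D$ (using $|\omega|<1$) by $C\tilde A^{|q|}M_q$. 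Writing $m!/(m-q)!\le\prod_lm_l^{q_l}$ and splitting $e^{-M(h|m|)}\le C\prod_le^{-M(h'm_l)}$ via (\ref{nejednakostM2}), one absorbs each factor $m_l^{q_l}$ into $(1/h'')^{q_l}M_{q_l}$ through the defining inequality $t^{q}\le M_qe^{M(t)}$ of the associated function, keeping a spare half of each exponential to guarantee convergence of the resulting series by Lemma \ref{suma}a). The product $\prod_lM_{q_l}$ recombines into $M_{|q|}=M_q$ by the logarithmic convexity $(M.1)$ (which yields $M_aM_b\le M_{a+b}$), giving $(i)$.

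For $(i)\Rightarrow(ii)$ the idea is to pass to the distinguished boundary. The bounds in $(i)$ force all holomorphic derivatives of $F$ to be bounded on $\mathbf D$, so $F$ and all its derivatives extend continuously to $\overline{\mathbf D}$; in particular $b_k=\frac1{(2\pi)^n}\int_{[0,2\pi]^n}F(e^{i\theta})e^{-ik\cdot\theta}\,d\theta$. Since $\theta\mapsto e^{i\theta}$ is entire with derivatives bounded by $1$, closure of the class under analytic composition (a consequence of $(M.2)$) gives $\|\partial_\theta^PF(e^{i\cdot})\|_\infty\le C\tilde A^{|P|}M_{|P|}$. Integrating by parts $Q$ times in a single angular variable $\theta_l$ and optimizing over $Q$ by the definition (\ref{assf}) of $M$ yields $|b_k|\le Ce^{-M(k_l/\tilde A)}$ whenever $k_l\ge1$; choosing the best coordinate and using $\max_lk_l\ge|k|/n$ produces $|b_k|\le Ce^{-M(h|k|)}$ for a suitable $h>0$.

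The final, and main, step is to transfer this decay from $b_k$ to $a_m$, and here the hypothesis $\lim_{|m|\to\infty}a_m=0$ is essential: the naive identity $a_m=\sum_{k\le m}b_k$ only shows $a_m$ bounded. Evaluating it along $m=(N,\dots,N)$ and letting $N\to\infty$ shows $\sum_{k\ge0}b_k=0$, whence $a_m=-\sum_{k\not\le m}b_k$, the sum running over those $k$ with $k_l>m_l$ for some $l$. A union bound gives $|a_m|\le\sum_{l=1}^n\sum_{k_l>m_l}|b_k|$, and for each $l$ the substitution $|b_k|\le Ce^{-M(h|k|)}$, splitting the exponent into two halves (one to extract $e^{-\frac12M(hm_l)}$ using $|k|\ge k_l>m_l$, the other to sum over the remaining indices by Lemma \ref{suma}a)), leaves a convergent series. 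Finally $\sum_le^{-\frac12M(hm_l)}\le n\,e^{-\frac12M(h\max_lm_l)}\le n\,e^{-\frac12M(h|m|/n)}$, and one more use of (\ref{nejednakostM2}) turns this into $Ce^{-M(h'|m|)}$, giving $(ii)$. I expect the bookkeeping of this multidimensional tail estimate, together with the verification that analytic composition preserves the derivative bounds, to be the only genuinely delicate points; everything else mirrors Proposition \ref{5.1} and the associated-function inequalities of Lemma \ref{koren}.
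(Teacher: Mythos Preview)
Your argument for $(ii)\Rightarrow(i)$ is essentially the paper's. For $(i)\Rightarrow(ii)$ you take a genuinely different route: the paper applies Cauchy's estimate inside $\mathbf D$, which only yields $\frac{m!}{(m-p)!}|b_m|\le Ch^{|p|}M_p$ for $p\le m$, and then invokes the new hypothesis $(M.00)$ (Lemma~\ref{suma}\,b)) to remove that restriction before passing from $b$ to $a$ via the combinatorial identity (\ref{1djp})--(\ref{aa}). Your boundary/integration-by-parts idea would bypass $(M.00)$ altogether, which is attractive. (A small caveat: the step ``closure under analytic composition'' needs $(M.0^s)$ in addition to $(M.2)$ to control the Stirling-number sums arising from $(\omega\partial_\omega)^Q=\sum_k S(Q,k)\omega^k\partial_\omega^k$; this is routine under the standing assumptions of the section, but is not quite a consequence of $(M.2)$ alone.)

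There is, however, a real gap in your final tail estimate. The identity $a_m=-\sum_{k\not\le m}b_k$ is correct, but after the union bound you arrive at $|a_m|\le C\sum_{l=1}^n e^{-\frac12 M(hm_l)}$ and then assert
\[
\sum_{l=1}^n e^{-\frac12 M(hm_l)} \le n\, e^{-\frac12 M(h\max_l m_l)}.
\]
This inequality is the wrong way round: $M$ is increasing, so each summand satisfies $e^{-\frac12 M(hm_l)}\ge e^{-\frac12 M(h\max_j m_j)}$. Concretely, for $m=(N,0,\dots,0)$ the term with $l=2$ is $e^{-\frac12 M(0)}=1$, so your bound on $|a_m|$ does not even tend to $0$ as $N\to\infty$. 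The decomposition $\{k:k\not\le m\}=\bigcup_l\{k:k_l>m_l\}$ is simply too coarse, because the pieces with small $m_l$ carry no decay. What repairs the argument is the sharper representation
\[
a_m=(-1)^n\sum_{k\ge m+\mathbf 1} b_k,
\]
obtained by telescoping the relation $b_k=\sum_{\varepsilon\le\mathbf 1}(-1)^{|\varepsilon|}a_{k-\varepsilon}$ in each coordinate and using $a_m\to 0$ along every coordinate ray; this is exactly what the paper extracts from (\ref{1djp})--(\ref{aa}) upon letting the auxiliary parameter $s\to\infty$. From $|a_m|\le\sum_{k\ge m+\mathbf 1}|b_k|$ every index in the sum satisfies $|k|\ge|m|+n$, and then your ``split the exponent in halves'' manoeuvre, combined with Lemma~\ref{koren}, does yield $|a_m|\le C' e^{-M(h'|m|)}$ as required.
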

\begin{proof}
We will prove the Roumieu case not repeating the arguments for the Beurling case but in the end of the proof  of part one we indicate the changes in the proof of the Beurling case.

 Let us assume that $(i)$ holds. Let $F(\omega)=\sum_{m\in\mathbb N^n} b_m\omega^m,\;\omega\in\mathbf D,$ be the power series expansion of $F$.  Then, 
$$D^p F(\omega)=\sum_{m\geq p}\frac{m!}{(m-p)!} b_m \omega^{m-p},\;\omega\in\mathbf  D$$
and due to Cauchy formula, there exist $C>0, h>1 $ such that
$$\frac{m!}{(m-p)!}|b_m|\leq C h^{|p|} M_p,\; p, m\in\mathbb N_0^n.$$
(Here is the difference with the Beurling case.)

We will use  relations between $a_n$ and $b_n, \; n\in \mathbb N,$ given in \cite{D1} and \cite{JPP}. There holds
\begin{equation*}\label{djp} b_m=\sum_{j\leq m,j\leq \mathbf 1}(-1)^ja_{m-j},\;
 \sum_{k\leq s}b_{m+\mathbf 1+k}= \sum_{k\leq s}\sum_{j\leq\mathbf 1}(-1)^ja_{m+k+\mathbf 1-j}.
 \end{equation*}

An elementary combinatorial result given in \cite{JPP}
implies
 \begin{equation}\label{1djp}
 \sum_{k\leq s}b_{m+\mathbf 1+k}= \sum_{j\leq\mathbf 1}(-1)^{|j|}a_{m+s+\mathbf 1-s^j},
 \end{equation}
with the notation 
$$s_l^j=0 \mbox{ if } s_l=0,\;  \;s_l^j=s_l+1 \mbox{ if }  s_l=1, \; l=1,...,n:$$
$
$
By (\ref{1djp}) one has
\begin{equation}\label{aa}
(-1)^{|n|}a_m=\sum_{k\leq s}b_{m+\mathbf 1+k}-\sum_{\substack{j\leq\mathbf{1}\\ j\not=\mathbf 1}}(-1)^{|j|}a_{m+s+\mathbf 1-s^j}.\end{equation}

Let $m_i\geq m_0, i=1,...,n,$ where $m_0$ is from $(M.00).$ Since $\frac{m!}{(m-p)!}\geq e^{-|p|} m^p$, when  $m\geq p$ (which means $m_i\geq p_i, i=1,...,n$), $ p, m
\in\mathbb N_0^n$, we obtain
\begin{equation}\label{nej11}
|b_m|\leq C \prod_{j=1}^{n} \inf_{p_j\leq m_j} \frac{ (eh)^{p_j} M_{p_j}}{m_j^{p_j}}, m_i, p_i\in\mathbb N_0.
\end{equation}

Now, by $(M.00)$, for every $i\in\{1,...,n\}\quad \mbox{wherever}\  p_i\geq  m_i>m_0$ and $eh>1$,
$$
\frac{ (eh)^{p_i} M_{p_i}} {m_i^{p_i}}\geq \frac { (eh)^{m_i} M_{m_i}} {m_i^{m_i}}.
$$
Thus, by (\ref{nej11}),
\begin{equation}\label{nej22}
|b_m|\leq C \prod_{j=1}^{n} \inf_{p_j\in \mathbb N_0} \frac{ (eh)^{p_j} M_{p_j}}{m_j^{p_j}}, m_i\geq m_0, m_i, p_i\in\mathbb N_0,
\end{equation}
which gives, provided $h_1=1/(eh)$, 
$$|b_m|
\leq \prod_{j=1}^n\inf_{p_j\in\mathbb N_0}
\frac{M_{p_j}}{(\frac{m_j}{eh}) ^{p_j}  }=\prod_{j=1}^ne^{-M(h_1m_j)}, m\in\mathbb N_0^n, m\geq m_0\cdot{\bf 1}=(m_0,m_0,\dots, m_0).$$
In latter we supposed  $m_j\geq m_0, j\in \{1,2,\dots, n\}$.  If it is not the case, namely when $m_{j_0}<m_0$ for some $j_0\in \{1,2,\dots, n\}$, 
the quantity
$$ 
 \inf_{p_{j_0}\leq m_{j_0}} \frac{ (eh)^{p_{j_0}} M_{p_{j_0}}}{m_{j_0}^{p_{j_0}}}\cdot e^{M(h_1 m_{j_0})}
  $$
is finite, i.e. equation (\ref{nej22}) holds for every $m\in \NN^n_0$ with possibly larger constant $C$.
 Since $$\max_{k\in\{1,\dots, n\}}m_k\geq \frac{|m|}{n}\Rightarrow e^{-M(h_1 \max_{k\in\{1,\dots, n\}}m_k)}\leq e^{-M(\frac{h_1}{n} |m|)}, \;j=1,2,\dots, n,$$
 we infer
 $$e^{-M(h_1m_1)}e^{-M(h_1m_2)}\cdots e^{-M(h_1m_{n}) }  \leq e^{-M(h_1\max_{k\in \{1,2,\dots, n\} }m_k)}\leq  e^{-M(\frac{h_1}{n}|m|)}. $$

 Take $h_2={h_1}/{|n|}$. Let $p,m\in \Nd$ such that $p\geq m.$ Then,
 $$M(h_2|p|)\geq \frac{M(h_2|p-m|)+M(h_2|m|)}{2}.$$
 By (\ref{nejednakostM2}), we have ${M(t)}/{2}\geq M({t}/{2^a} )-C, t>0.$ So, we continue,
 $$M(h_2|p|)\geq M(h_3|p-m|)+M(h_3|m|)-C,\; \mbox{ where }\; h_3={h_2}/{2^a}.$$

By (\ref{aa}) one has
 $$|a_m|\leq \sum_{k\leq s}|b_{m+\mathbf{1}+k}|+\sum_{\substack{j\leq\mathbf{1}\\ j\not=\mathbf 1}}|a_{m+s+\mathbf{1}-s^{(j)}}|$$
 \begin{equation} \label{dodato}\leq C_2 e^{-M(h_3|m|) }\sum_{k\in\Nd} e^{-M(h_3|k+{\bf 1}|)}+\sum_{\substack{j\leq\mathbf{1}\\ j\not=\mathbf 1}}|a_{m+s+\mathbf{1}-s^{(j)}}|, \;m\in\Nd.
\end{equation}
Next, $\sum_{k\in\Nd} e^{-M(h_3|k+{\bf 1}|)}<\infty$ and the second sum in (\ref{dodato}) has $2^n-1$ terms. 

Since $a_l\to 0$ when $|l|\to \infty, $ find $k\in \NN_0^n$ such that \\$\ds \sum_{\substack{j\leq\mathbf{1}\\ j\not=\mathbf 1}}|a_{m+s+\mathbf{1}-s^{(j)}}|\leq C_3 e^{-h_3|m|}$ for a  fixed $m$.

So,  by (\ref{1djp}), it follows that
 there exists a new constant $C>0$ such that $$|a_m|\leq C e^{-M(h_3|m|)},\; m\in\mathbb N_0^n.$$
 
Let us note that in the Beurling case one has to reverse this  procedure  where $h_1=1/(eh)$,  $h_2=h_1/|n|$ and $h_3=h_2/2^a$. Actually, for given $h_3>0$ one should find 
$h$ in the previous procedure: $h_2=h_32^a,$ $h_1=h_2|n|$ and $h=1/(eh_1)$ provided $\displaystyle he=\frac{1}{h_1}>1$. With such determined $h>0$ which depend only on $h_3$ one have that for every $h_3>0$ there is a constant
$C>0$ such that $a_m\leq Ce^{h_3|m|}, m\in\mathbb N^n_0.$   So the same proof works for the Beurling case, as well.

Suppose now that $(ii)$ holds. Let $m\in\Nd.$  By  (\ref{nejednakostM2}),
  $$M(h|m|)\geq\frac{ M(hm_1)+M(hm_2)+\cdots M(hm_{n})}{n+2}\geq M(h'm_1)+\cdots M(h'm_{n})-C,$$ for some $C>0$ and $h'={h}/{{n}^a}$.
Now we have with new constants $C>0$ and $h>0,$
$$|a_m|\leq C e^{-M(hm_1)}\cdots e^{-M(hm_{n})},\; m\in\Nd.$$
This implies,  
$$\displaystyle |b_m|\leq\sum_{k\leq m,k\not= \bf{1}}|a_m|\leq {card}\{k\in\Nd, k\leq m, k\not={\bf 1}\}  \cdot  C e^{-M(hm_1)}\cdots e^{-M(hm_{n})} $$
$$\leq (m_1+1)\dots (m_n+1)\cdot C e^{-M(hm_1)}\cdots e^{-M(hm_{n})}\leq  C_1 e^{-M(h_1m_1)}\cdots e^{-M(h_1m_{n})} $$

for some other constants $C_1, h_1>0$.
Further on, for every $\omega\in \bf D$:
$$|D^p F(\omega)|\leq \sum_{m\geq p}\frac{m!}{(m-p)!}|b_m|\leq C_1\sum_{m\in\Nd} \prod_{j=1}^{n} m_j^{p_j} e^{-M(h_1 m_j)}$$
$$\leq C_2 \sum_{m\in\Nd} \prod_{j=1}^{n} m_j^{p_j} e^{-2M(h_2m_j)}, m\in \Nd,$$
where we used (\ref{nejednakostM2}) with suitable new constants $C_2$ and $h_2$. It remains to utilize the obvious inequality
$$\prod_{j=1}^{n}e^{-M(h_2 m_j)}\leq e^{-M(h_2{|m|}/{n})},\; n\in \Nd,$$
as well as the following fact
$$\prod_{j=1}^{n} m_j^{p_j }e^{-M(h_2 m_j)}\leq \prod_{j=1}^{n} \frac{1}{h_2^{p_j}} M_{p_j}= \Big(\frac{1}{h_2}\Big)^{|p|}M_{|p|},\; p\in\mathbb N^n.$$
Since $\sum_{m\in \Nd}e^{-M(h_2\frac{|m|}{n})}<\infty $, the proof is completed.
\end{proof}

The next step towards the proof is the following assertion which traced back to  \cite[Theorem $(3.3)$]{D1}, in the case $n=1$.
\begin{prop}\label{3.3}
Let $f\in L^2(\mathbb R^{n}_+)$ such that for some $A>0$ and some $C>0$, respectively, for every $A>0$ exists $C>0$ (this is the Beurling case), 
$$|t^k f(t)|\leq C A^{|k|} M_{k}, k\in\Nd, \;t\in\mathbb R^{n}.$$ Then
for every $h>0$ exists $C>0$, respectively, for some $h>0$ exists $C>0$, such that
\begin{equation}\label{fd} |(\mathcal F_D(f))^{(p)}(w)|\leq C h^{|p|}  M_{|p|} \ \mbox{for every} \ p\in \Nd,\  w\in \mathbf D_{\emptyset},
\end{equation}
where
$$\mathbf D_{\emptyset}=\{w=(w_1,\dots w_n)\in \mathbf D^{n} : \  {\rm Re}(w_i)\leq 0,i=1,2,\dots, n\}$$
\end{prop}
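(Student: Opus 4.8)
I need to show that the growth bound $|t^k f(t)|\leq C A^{|k|} M_{k}$ on the function $f$ transfers to a derivative bound on its transform $\mathcal{F}_D(f)$ over the region $\mathbf{D}_{\emptyset}$. The plan is to work directly from the integral representation of $\mathcal{F}_D(f)$. Recall that for $u\in\mathcal{G}'^*$ we have $\mathcal{F}_D u(\omega)=\langle u(t),\prod_{l=1}^n e^{-\frac{1}{2}\frac{1+\omega_l}{1-\omega_l}t_l}\rangle$, so since here $f\in L^2(\mathbb{R}^n_+)$ satisfies a concrete pointwise bound, this pairing is an honest integral $\int_{\mathbb{R}^n_+} f(t)\prod_{l=1}^n e^{-\frac{1}{2}\frac{1+\omega_l}{1-\omega_l}t_l}\,dt$. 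Differentiating $p$ times under the integral sign in $\omega$ brings down, for each coordinate $l$, a rational factor in $(1-\omega_l)$ times a polynomial in $t_l$; the essential point is that each $\omega_l$-derivative of the kernel produces a factor comparable to $t_l/(1-\omega_l)^2$ (up to lower-order terms), so that $D^p$ of the kernel is bounded by a constant times $t^p (1-\omega)^{-2p}$ times the kernel itself, modulo combinatorial factors from repeated differentiation.

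\textbf{Exploiting the region $\mathbf{D}_{\emptyset}$.} The constraint $\mathrm{Re}(\omega_i)\leq 0$ is exactly what controls the otherwise singular factors. For $\omega_l$ in the unit disc with $\mathrm{Re}(\omega_l)\leq 0$, the quantity $\mathrm{Re}\big(\frac{1+\omega_l}{1-\omega_l}\big)$ is nonnegative, so the exponential kernel $e^{-\frac12\frac{1+\omega_l}{1-\omega_l}t_l}$ has modulus at most $1$ and the integral converges absolutely; simultaneously $|1-\omega_l|$ stays bounded away from $0$ on this region, so the $(1-\omega_l)^{-2p}$ factors are dominated by $R^{|p|}$ for a fixed constant $R>0$. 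Thus after differentiating under the integral I arrive at an estimate of the form $|(\mathcal{F}_D f)^{(p)}(\omega)|\leq C' R^{|p|}\int_{\mathbb{R}^n_+}|f(t)|\,|t^p|\,dt$ times a combinatorial prefactor, and the whole problem reduces to estimating the weighted $L^1$-norm $\int_{\mathbb{R}^n_+}|t^p f(t)|\,dt$.

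\textbf{Converting the $L^1$ moment bound into an $M_{|p|}$ bound.} Here I use the hypothesis $|t^k f(t)|\leq C A^{|k|}M_k$. The standard trick is to split off two extra powers per coordinate to gain integrability: writing $|t^p f(t)| = |t^{p+2\mathbf{1}} f(t)|\cdot\prod_l t_l^{-2}$ on the region where all $t_l\geq 1$, and bounding $|t^{p+2\mathbf{1}}f(t)|\leq C A^{|p|+2n}M_{p+2\mathbf{1}}$, I integrate the convergent factor $\prod_l t_l^{-2}$; the region where some $t_l<1$ is handled by the direct bound on $|f|$ together with $f\in L^2$. By $(M.2)$ one has $M_{p+2\mathbf{1}}\leq A_1 H^{|p|}M_{p}M_{2\mathbf 1}$, and then $M_p=M_{|p|}$ together with Lemma~\ref{koren}~c) repackages everything into $C h^{|p|}M_{|p|}$ with new constants, which is exactly \eqref{fd}. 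The Beurling case follows by reversing the quantifier on $A$, exactly as in the transfer arguments already used in Propositions~\ref{5.1} and \ref{5.4}.

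\textbf{The main obstacle.} The genuinely delicate step is verifying the kernel derivative estimate $|D^p_\omega \prod_l e^{-\frac12\frac{1+\omega_l}{1-\omega_l}t_l}|\leq (\text{comb.})\cdot R^{|p|}\, t^p\, \prod_l|e^{-\frac12\frac{1+\omega_l}{1-\omega_l}t_l}|$ uniformly on $\mathbf{D}_{\emptyset}$, since the repeated $\omega$-derivatives of the rational exponent generate a sum over partitions whose coefficients must be controlled; each such term carries a product of powers of $t_l$ of total degree at most $|p|$ and a power of $(1-\omega_l)^{-1}$, and I must check that summing these Faà di Bruno-type contributions still leaves a clean $R^{|p|} t^p$ bound after using $\mathrm{Re}(\omega_l)\leq 0$. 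I expect this to be the only place requiring care; the moment-to-$M_{|p|}$ conversion and the quantifier bookkeeping are routine given the lemmas already established.
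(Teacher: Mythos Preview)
Your overall strategy---differentiate the kernel under the integral, bound the resulting $t$-moments of $f$, and assemble---is exactly the paper's route. The gap is in your expectation for the size of the ``combinatorial prefactor''. It is \emph{not} merely exponential in $|p|$; it is genuinely factorial, and absorbing it is where the hypothesis $(M.0^s)$ enters. Concretely, in one variable with $\phi(\omega)=-\tfrac{t}{2}\tfrac{1+\omega}{1-\omega}$ one has $\phi^{(j)}(\omega)=-j!\,t\,(1-\omega)^{-(j+1)}$, and the Fa\`a di Bruno expansion gives
\[
D^p_\omega e^{\phi}=e^{\phi}\sum_{k=1}^{p}(-1)^k L(p,k)\,t^k\,(1-\omega)^{-(p+k)},\qquad L(p,k)=\binom{p-1}{k-1}\frac{p!}{k!}.
\]
On $\mathbf D_\emptyset$ the factor $(1-\omega)^{-(p+k)}$ is indeed bounded by $1$, but $L(p,1)=p!$, so no bound of the shape $R^{p}t^{p}$ is possible. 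What you actually get after integrating is
\[
|(\mathcal F_D f)^{(p)}(\omega)|\le C'\sum_{k\le p} L(p,k)\,A'^{|k|}M_k,
\]
and the paper (following Duran) obtains the multivariable analogue with combinatorial weight $\prod_j m_j(m_j+1)\cdots(m_j+p_j-1)\le (2|p|)!/(|p|-1)!$. The crucial step you are missing is then
\[
\frac{(2|p|)!}{(|p|-1)!\,|k|!}\le A_3^{|p|}\,|p-k|!\le A_4^{|p|}M_{p-k}\quad\text{by }(M.0^s),
\]
after which $(M.2)$ gives $M_{p-k}M_k\le A H^{|p|}M_p$ and the bound \eqref{fd} follows. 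Your moment-to-$M_{|p|}$ conversion is fine, but it only handles the $M_k$ piece; the factorial piece requires $(M.0^s)$, which you never invoke. This is not incidental: $(M.0^s)$ (rather than the weaker $(M.0)$) is precisely the assumption that distinguishes the $\mathcal G^*$ setting, cf.\ Remark~\ref{vaz}.
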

\begin{rem}
This proposition implies that for $f\in L^2(\mathbb R^{n}_+)\cap C^\infty(\mathbb R^n_+)$, for every $s\in\mathbb N_0^n$,
$$|t^k f^{(s)}(t)|\leq C A^{|k|} M_{k}, k\in\Nd, \;t\in\mathbb R^{n}$$
implies (\ref{fd}) with the corresponding constants in the Roumieu and in the Beurling case, respectively.
\end{rem}
\begin{proof}
The proof is a modification of the proof of \cite[Theorem $3.3$]{D1}. We prove the assertion only in the Roumieu case. Beurling case is similar. 

 Let $p\in\mathbb N^n.$ Let $w\in D_{\emptyset}$.  From the initial assumption and \cite[(3.7) and (3.8)]{D1} we have
$$\Big|\int_{\mathbb R^{n}} (-t)^{k} f(t) \cdot \prod_{j=1}^{n} e^{-\frac{1}{2} \frac{1+w_j}{1-w_j} t_j} dt\Big|\leq C_1 A_1^{|k|} M_{k},
\ k\in\Nd.$$
So, we conclude that for $p\in\mathbb N_0^n,$
$$ |(\mathcal F_D(f))^{(p)}(w)|\leq$$
$$
\sum_{k\leq p}\frac{1}{k!} \sum_{m\leq k}{k\choose m}\Big( \prod_{j=1}^n m_j (m_j+1)\cdots (m_j+p_j-1)\Big) C_1 A_1^{|k|} M_{k}.$$
Since $$
\sum_{m\leq k} {k\choose m}\leq 2^{|k|}, \;m, k\in \Nd,\ \  \frac{|k|!}{k!}\leq n^{|k|},\  \mbox{and}
$$ 
  $$\prod_{j=1}^n m_j (m_j+1)\cdots (m_j+p_j-1)\leq  |p|(|p|+1)\cdots (2|p|)\leq 
   \frac{(2|p|)!}{(|p|-1)!},$$it follows
$$
|(\mathcal F_D(f))^{(p)}(w)| \leq  C_1  A_2^{|p|} \frac{(2|p|)!}{(|p|-1)!} \sum_{k\leq p}\frac{1}{|k|!}  M_{k},\; \omega\in \mathbf D_{\emptyset},
 $$
where $A_2=2nA_1$. Further on,   we conclude 
$$\frac{(2|p|)!}{(|p|-1)! \cdot |k|!\cdot (|p-k|+1)!}\leq  3^{2|p|},\; k, p, \Nd, k\leq p\Rightarrow$$
$$ \frac{(2|p|)!}{(|p|-1)! \cdot |k|!}\leq 9^{|p|} (|p-k|+1)!\leq A_3^p |p-k|! $$

 for suitable $A_3>0.$ By condition  $(M.0^s),$ 
 
 $$ \frac{(2|p|)!}{(|p|-1)! \cdot |k|!}\leq  A_3^p |p-k|! \leq A_4^{|p|} M_{p-k}$$ 
 
provided suitable $A_4>0$,
 and so we continue, utilizing $(M.2)$:
 
 $$
|(\mathcal F_D(f))^{(p)}(w)| \leq C_1(A_2A_4)^{|p|} \sum_{k\leq p}M_{p-k}  M_{k}
 $$
$$ \leq C_1 A_5^{|p|} M_p \sum_{k\leq p}1\leq C_1 A_5^{|p|} M_p\cdot p_1p_2\cdots p_n$$
$$\leq C_1 A_5^{|p|} M_p |p|^n\leq C_2 A_6^{|p|} M_{p} ,\; p\in \Nd,$$
where  $A_5=HA_2A_4$  ($H$ is the constant from $(M.2)$)  and $C_2,A_6$ are suitable constants for which $C_1 A_5^{l}l^n \leq C_2 A_6^l$ holds for every $l\in\mathbb N_0.$ This concludes the proof. 
\end{proof}
For the last assertion we follow \cite{D1} (case $n=1$) and \cite{JPP} (case $n>1$).
\begin{prop} Let $f\in\mathcal G^*(\mathbb R^n_+)$. Then, in the Roumieu case, there exist constant $C,A>0$, in Beurling case, for every $A>0$ exists $C>0$, such that
\begin{equation}\label{exp2}
|D^p\mathcal F_D(f)(\omega)|\leq C A^{|p|} M_{p},\; \omega\in \mathbf D,  p\in \Nd .\end{equation}
\end{prop}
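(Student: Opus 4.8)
The plan is to reduce the bound on the full polydisc $\mathbf D$ to the bound on $\mathbf D_{\emptyset}$ that Proposition~\ref{3.3} already provides, using the partial Hankel--Clifford transforms $\mathcal H_0^{\Lambda'}$ to reflect the coordinates with nonnegative real part. First I would cover $\mathbf D$ by the $2^n$ regions $R_{\Lambda'}=\{\omega\in\mathbf D:\ \mathrm{Re}(\omega_l)\geq 0\ \text{for}\ l\in\Lambda',\ \mathrm{Re}(\omega_l)\leq 0\ \text{for}\ l\notin\Lambda'\}$, indexed by $\Lambda'\subseteq\{1,\dots,n\}$, so that it suffices to prove (\ref{exp2}) on each $R_{\Lambda'}$. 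For $\nu\in R_{\Lambda'}$ set $\omega_l=-\nu_l$ for $l\in\Lambda'$ and $\omega_l=\nu_l$ otherwise; then $\omega\in\mathbf D_{\emptyset}$, and as $\nu$ ranges over $R_{\Lambda'}$ the point $\omega$ ranges over all of $\mathbf D_{\emptyset}$.

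The central identity comes from (\ref{exp1}) together with the generating-function formula $\mathcal F_D(u)(\omega)=\prod_j(1-\omega_j)\sum_m a_m(u)\omega^m$. Writing $f=\sum_m a_m\ell_m$ and using $\nu^m=(-1)^{\sum_{l\in\Lambda'}m_l}\omega^m$, the reflected sum $\sum_m a_m\nu^m$ coincides with the Laguerre sum of $\mathcal H_0^{\Lambda'}f$, so that $\mathcal F_D(f)(\nu)=\Phi_{\Lambda'}(\omega)\,\mathcal F_D(\mathcal H_0^{\Lambda'}f)(\omega)$, where $\Phi_{\Lambda'}(\omega)=\prod_{l\in\Lambda'}\frac{1+\omega_l}{1-\omega_l}$ is the ratio of the two prefactors $\prod_l(1-\nu_l)$ and $\prod_l(1-\omega_l)$.

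By the isomorphism (\ref{izom2}), $g:=\mathcal H_0^{\Lambda'}f$ again belongs to $\mathcal G^*(\mathbb R^n_+)$. From the defining $L^2$-seminorms I would deduce, via the one-dimensional embedding $\|h\|_\infty^2\leq C\|h\|_2\|h'\|_2$ applied in each variable (equivalently, Sobolev embedding on $\mathbb R^n_+$) together with Lemma~\ref{koren}\,c) to pass from $\sqrt{M_{2k}}$ to $M_k$, the pointwise bounds $|t^k g(t)|\leq CA^{|k|}M_k$, $k\in\Nd$. Proposition~\ref{3.3} (and its Remark) then yields $|D^p\mathcal F_D(g)(\omega)|\leq C h^{|p|}M_{|p|}$ for all $\omega\in\mathbf D_{\emptyset}$.

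Finally I would differentiate the product identity by the Leibniz rule. On $\mathbf D_{\emptyset}$ one has $\mathrm{Re}(\omega_l)\leq 0$, hence $|1-\omega_l|\geq 1$, so $\Phi_{\Lambda'}$ and all its derivatives stay bounded: $|D^p\Phi_{\Lambda'}(\omega)|\leq 2^n p!$, and by $(M.0^s)$ one has $p!\leq CA^{|p|}M_{|p|}$. Combining this with the bound on $\mathcal F_D(g)$ and using $(M.2)$ to recombine $M_jM_{|p|-j}\leq AH^{|p|}M_{|p|}$ gives $|D^p\mathcal F_D(f)(\nu)|\leq CA^{|p|}M_p$ on $R_{\Lambda'}$; the union over $\Lambda'$ proves (\ref{exp2}). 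In the Beurling case one keeps track of the quantifier ``for every $A$'' at each step. I expect the main obstacle to be precisely the passage from $\mathbf D_{\emptyset}$ to all of $\mathbf D$: the rational factor $\Phi_{\Lambda'}$ is singular at $\omega_l=1$, and the argument succeeds only because the reflection maps $R_{\Lambda'}$ into the region where $|1-\omega_l|\geq 1$ keeps $\Phi_{\Lambda'}$ and its derivatives under control. A secondary technical point is the conversion of the $L^2$-based seminorms of $\mathcal G^*$ into the pointwise weighted bounds required by Proposition~\ref{3.3}.
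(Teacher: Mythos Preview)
Your proposal is correct and follows essentially the same route as the paper's proof: cover $\mathbf D$ by the $2^n$ reflected copies of $\mathbf D_{\emptyset}$, use the identity $\mathcal F_D(f)(\nu)=\Phi_{\Lambda'}(\omega)\,\mathcal F_D(\mathcal H_0^{\Lambda'}f)(\omega)$ coming from (\ref{exp1}) and the generating-function formula, invoke (\ref{izom2}) and Proposition~\ref{3.3} on $\mathbf D_{\emptyset}$, and finish with the Leibniz rule together with $(M.0^s)$ and the logarithmic convexity of $\{M_p\}$. The only substantive addition on your side is that you make explicit the passage from the $L^2$-based seminorms of $\mathcal G^*$ to the pointwise bounds $|t^kg(t)|\leq CA^{|k|}M_k$ needed in Proposition~\ref{3.3}; the paper simply appeals to ``Theorem~\ref{3.3} (and remark after this theorem)'' without spelling this step out, so your Sobolev-type argument fills a small gap in the exposition. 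One minor remark: for the recombination $M_jM_{|p|-j}\leq M_{|p|}$ you can use $(M.1)$ directly rather than $(M.2)$, as the paper does.
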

\begin{proof}
We   prove the assertion only in the Roumieu case.
 Let $f\in\mathcal G^{\{M_p\}}(\mathbb R^n_+)$ and $f=\sum_{m\in\mathbb N^n_0}a_m\ell_m$. By (\ref{exp1}), $$ \mathcal H_0^{\Lambda'}f= \sum_{m\in\mathbb N^n}(-1)^{n_1^{\lambda'}+...+n_1^{\lambda'}}a_m\ell_m.$$
 With the notation (as in \cite{JPP}), for $\omega\in \mathbf D_{(\Lambda')}, $ where $\Lambda'=\{1,2,\dots n'\}$, $\Lambda''=\{1,2,\dots, n''\}$ and $\Lambda=(\Lambda',\Lambda'')$,
$$\widetilde \Lambda'\omega=\zeta, \mbox{ where }\zeta_{\lambda'_l}=-\omega_{\lambda'_l},\; l=1,...,d', \; \zeta_{\lambda''_s}=w_{\lambda''_s},\; s=1,...,d''$$ gives $\Lambda'(\mathbf D_{(\Lambda')})= \mathbf D_{\emptyset}$ and 
there holds  
$$\mathcal F_{\mathbf D}(f)(w)=\prod_{l=1}^{d'}\frac{1+w_{\lambda_l'}}{1-w_{\lambda_{l'}}}\mathcal F_{\mathbf D}(\mathcal H^{\Lambda'}_{0}f)(\widetilde \Lambda'w),\; w\in D. 
$$

Since $\mathcal H_0^{(\Lambda')}f\in\mathcal G^{\{M_p\}}$, from Theorem \ref{3.3} (and remark after this theorem) it follows
\begin{equation}\label{5.15}
|D^m \mathcal F_D(\mathcal H_0^{(\Lambda')} f)(\omega)|\leq C_1 A_1^{|m|}M_m,  \;m\in\Nd, w\in \mathbf D_{\emptyset},.\end{equation}
And now we  have, utilizing Leibniz rule and the estimates from \cite[(5.16)]{JPP} for $m\in\Nd$, 
$$|D^m \mathcal F_D f(w)|\leq \sum_{r_{\Lambda'}\leq m_{\Lambda'}} {m_{\Lambda'}\choose r_{\Lambda'}}2^{d'} r_{\Lambda'}!|D^{m_{\Lambda'}-r_{\Lambda'}} D^{m_{\Lambda''}}_{w_{\Lambda''}}\mathcal F_{\mathbf D}( \mathcal H^{(\Lambda')}f)(\tilde \Lambda' w)|$$
$$\leq C_2  \sum_{r_{\Lambda'}\leq m_{\Lambda'}} {m_{\Lambda'} \choose  r_{\Lambda'}} M_{r_{\Lambda'}} A_2^{|m-r_{\Lambda'}|}M_{m_{\Lambda'}-r_{\Lambda'}}M_{m_{\Lambda''}}\leq C_3 A_3^{|m|}M_m.$$
In the latter we named $C_2=C_1 2^{d'}$. We have also used  that $$ M_{r_{\Lambda'}}M_{m_{\Lambda'}-r_{\Lambda'}}M_{m_{\Lambda''}} \leq M_{m}$$ which  follows from $(M.1)$ and since $$ \sum_{r_{\Lambda'}\leq m_{\Lambda'}} {m_{\Lambda'}\choose r_{\Lambda'}}\leq 2^{|m_{\Lambda'}|},$$ we  take $A_3=2A_2.$ The proof is completed.
\end{proof}

Again, with the quoted assumption on $\{M_p\}$ the associated function of the form (\ref{assf}) and the orthonormal basis $\ell_i,  i\in \mathbb N_0$ of $L^2(\mathbb R^n_+)$, 
 due to the previous proposition, (\ref{5.1}) and  (\ref{5.4}), we have:
\begin{thm}
Let $f\in L^2(\mathbb R^{n}_+)$ and $a_m=\int_{\mathbb R^{n}_+} f(t) \ell_m(t) dt,m\in\Nd$. The following are equivalent:
\begin{itemize}
\item[$i)$] $\exists a>0$, respectively, $\forall a>0$, $\exists C=C_a>0$,  $$|a_m|\leq C e^{-M(a|m|)},\; m\in \Nd,$$
\item[$ii)$] $f\in\mathcal G^{\{M_p\}}(\mathbb R^n_+),$ respectively $f\in\mathcal G^{(M_p)}(\mathbb R^n_+).$
\end{itemize}
\end{thm}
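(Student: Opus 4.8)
The plan is to assemble the theorem from the three preceding propositions, the decisive link being the identity that identifies the transform $\mathcal F_D(f)$ with the generating function $F$ of Proposition \ref{5.4}. First I would record that, since $f\in L^2(\mathbb R^n_+)$, Bessel's inequality forces the Laguerre coefficients $a_m$ to tend to $0$ as $|m|\to\infty$; hence the standing hypothesis $\lim_{|m|\to\infty}a_m=0$ of Proposition \ref{5.4} is automatically satisfied, and the power series $\sum_m a_m\omega^m$ converges on $\mathbf D$ to a function whose product with $(1-\omega)^{\mathbf 1}$ is exactly $\mathcal F_D(f)$, which is the displayed expansion of $\mathcal F_D(u)$ recorded just before Proposition \ref{5.4}. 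Thus I may take $F(\omega)=\mathcal F_D(f)(\omega)$ with the very same coefficients $a_m$ throughout.

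The implication $(i)\Rightarrow(ii)$ is then immediate: assuming the bound $|a_m|\leq Ce^{-M(a|m|)}$, with the quantifier on $a$ chosen according to the Roumieu or the Beurling case, Proposition \ref{5.1} yields $f\in\mathcal G^{\{M_p\}}(\mathbb R^n_+)$, respectively $f\in\mathcal G^{(M_p)}(\mathbb R^n_+)$, with no further work.

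For $(ii)\Rightarrow(i)$ I would chain the remaining two results. Starting from $f\in\mathcal G^*(\mathbb R^n_+)$, the previous proposition, through estimate (\ref{exp2}), gives $|D^p\mathcal F_D(f)(\omega)|\leq CA^{|p|}M_p$ on all of $\mathbf D$, with $\exists A$ in the Roumieu case and $\forall A$ in the Beurling case. Reading this bound through the identification $F=\mathcal F_D(f)$, condition $(i)$ of Proposition \ref{5.4} holds, and its equivalence with condition $(ii)$ of that proposition then delivers $|a_m|\leq Ce^{-M(h|m|)}$, which is precisely statement $(i)$ of the theorem. The only point needing care is to track the quantifiers: the $\exists A$ / $\forall A$ of (\ref{exp2}) matches the $\exists A$ / $\forall A$ in Proposition \ref{5.4}$(i)$, and the resulting $\exists h$ / $\forall h$ matches the $\exists a$ / $\forall a$ demanded in $(i)$, so that the Roumieu hypotheses feed the Roumieu conclusion and the Beurling hypotheses feed the Beurling conclusion.

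Since both directions are citations of already-established facts, I expect no genuine obstacle in the estimates themselves; the single point that must not be glossed over is the legitimacy of the identification $F=\mathcal F_D(f)$, namely that the coefficients $a_m$ appearing abstractly in Proposition \ref{5.4} really are the Laguerre coefficients of $f$, and that the power series representation of $\mathcal F_D(f)$ is valid for every $f\in L^2(\mathbb R^n_+)$ and not merely for test functions. This is exactly what the displayed formula for $\mathcal F_D(u)$ together with the $L^2$-decay of $a_m$ secure.
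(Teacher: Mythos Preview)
Your proposal is correct and follows exactly the route the paper takes: the theorem is stated in the paper as an immediate consequence of Proposition~\ref{5.1}, Proposition~\ref{5.4}, and the preceding proposition establishing~(\ref{exp2}), with no further argument given. Your write-up simply makes explicit the identification $F=\mathcal F_D(f)$ via the displayed expansion of $\mathcal F_D(u)$ and the $L^2$-decay of the $a_m$, which the paper leaves implicit; nothing more is needed.
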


\section{Structural theorems}\label{Sec4}

In this section we show that the expansions through orthonormal basis of all involved spaces do not depend on the order of summations. We show this in the  case
$\mathcal E^*(\mathbb S^{n-1})\widehat{\otimes}\mathcal S(\mathbb R^m)$; the case of the tensor product of $k+2$ test spaces can be similarly examined.

\begin{prop}\label{karakterizacija}
Space $\mathcal E^*(\mathbb S^{n-1})\widehat{\otimes}\mathcal S^*(\mathbb R^{m})$ consists  of  functions with the following expansion,
\begin{multline}\label{br} a(\theta,x)=\sum_{i=0}^{\infty}\Big( \sum_{j=0}^{\infty}\sum_{k=0}^{N_j} a_{k,j,i}Y_{k,j}(\theta) \Big)u_i(x)=\\ =\sum_{j=0}^{\infty}\sum_{k=0}^{N_j}\Big(\sum_{i=0}^{\infty} a_{k,j,i}u_i(x)\Big)Y_{k,j}(\theta),
\theta \in\mathbb S^{n-1}, \;x\in\mathbb R^m,\end{multline}
where the expressions in brackets converge uniformly with respect to  convergence in $\mathcal E^*(\mathbb S^{n-1})$ and $\mathcal S^*(\mathbb R^m)$. There holds ($\theta\in\mathbb S^{n-1}, \;x\in\mathbb R^m$),
\begin{equation*}\label{11L}\exists h>0, \exists l>0, \mbox{ respectively } \forall h>0, \forall l>0, \exists C=C_{h,l}>0,
\end{equation*}
$$|a_{k,j,i}|\leq  C e^{-M(h\mu_i^{{1}/{q}})-M(lj)},\;  i\in\mathbb N_0, j\in\mathbb N_0, k\leq N_j.
$$
Next,
 $$u(\theta,x)=\sum_{i=0}^{\infty} \sum_{j=0}^{\infty}\sum_{k=0}^{N_j} b_{k,j,i}Y_{k,j}(\theta) u_i(x)\in\mathcal E'^*(\mathbb S^{n-1})\widehat{\otimes}\mathcal S'^*(\mathbb R^{m}),$$
if and only if
\begin{equation}\label{111L}\forall h>0, \forall l>0, \mbox{ respectively } \exists h>0, \exists l>0, \exists C=C_{h,l}>0,
\end{equation}
$$|b_{k,j,i}|\leq  C e^{M(h\mu_i^{{1}/{q}})+M(lj)},\;   i\in\mathbb N_0, j\in\mathbb N_0, k\leq N_j.
$$
\end{prop}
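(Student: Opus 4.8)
The plan is to exploit the characterizations of the individual factor spaces that the excerpt has already established, together with the nuclearity and consequent identification of the completed tensor product with a space of rapidly decreasing (respectively, at most exponentially growing) sequences. Concretely, from the quoted results of \cite{DR1}, \cite{DR2} on $\mathcal E^*(\mathbb S^{n-1})$ we have that a series $\sum_{j,k}c_{k,j}Y_{k,j}$ lies in $\mathcal E^{\{M_p\}}(\mathbb S^{n-1})$ iff $\exists l>0$ (resp. $\forall l>0$) with $|c_{k,j}|\leq C e^{-M(l\lambda_j^{1/\nu})}$, where $\lambda_j=j(j+n-2)$ is the Laplace-Beltrami eigenvalue; and from \cite{GPR} (equation (\ref{vazno11}) and the companion estimate) the Hermite/Shubin side gives $|c_i|\leq Ce^{-M(h\mu_i^{1/q})}$ (resp. the dual growth estimate (\ref{vazno11}) itself). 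First I would replace $M(l\lambda_j^{1/\nu})$ by $M(lj)$: since $\lambda_j=j(j+n-2)\asymp j^2$, one has $\lambda_j^{1/\nu}\asymp j^{2/\nu}$, and using that $M$ is increasing together with Lemma \ref{koren}(b) (the inequality (\ref{nejednakostM2})) one absorbs the power $2/\nu$ and the multiplicative constants into a change of the parameter $l$; this is a routine but necessary normalization so that the decay rate is expressed through $M(lj)$ as in the statement.

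Next I would set up the tensor-product identification. Because both factors are nuclear (stated in the excerpt), the completed $\pi$-tensor product $\mathcal E^*(\mathbb S^{n-1})\widehat\otimes\mathcal S^*(\mathbb R^m)$ coincides with the $\varepsilon$-tensor product and, in each Banach/Fréchet step, with the corresponding space of bi-indexed sequences decaying like the product of the two one-variable weights. The key inequality to invoke is the submultiplicative splitting and recombination of the associated function, namely (\ref{nejednakostM}) and (\ref{nejednakostM2}): to pass from a single weight controlling the pair $(i,j)$ to the product of two separate weights, and back, one uses $M(x+y)\leq M(2x)+M(2y)$ in one direction and the convexity-type bound furnished by (\ref{nejednakostM2}) in the other. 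Thus a bound of the form $|a_{k,j,i}|\leq C e^{-M(h\mu_i^{1/q})-M(lj)}$ is exactly equivalent, after adjusting $h,l$, to membership in the projective (Beurling) or inductive (Roumieu) limit of the weighted sequence spaces whose tensor product represents $\mathcal E^*\widehat\otimes\mathcal S^*$. The two iterated-summation representations in (\ref{br}) then represent the same element because, in a nuclear space, the double series converges absolutely and unconditionally, so Fubini-type rearrangement applies; the uniform convergence of the bracketed partial sums in each factor topology follows from the fact that the weighted tails are dominated by convergent series, exactly as in the summability argument used in the proof of Proposition \ref{5.1} (the convergence of $\sum e^{-M(\cdot)}$ there).

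For the dual statement I would dualize each factor characterization: $u\in\mathcal E'^*\widehat\otimes\mathcal S'^*$ iff its coefficient functional is bounded on the bounded sets of the test space, which by the quoted dual characterizations (the growth estimates $|b_i|\leq Ce^{M(k\lambda_i^{1/\nu})}$ for $\mathcal M'^*$ and (\ref{vazno11}) for $\mathcal S'^*$) translates into $|b_{k,j,i}|\leq C e^{M(h\mu_i^{1/q})+M(lj)}$ with the quantifiers on $h,l$ flipped as displayed in (\ref{111L}). The equivalence here is the statement that the strong dual of the nuclear tensor product is the tensor product of the strong duals, which is standard for (DFS)- and (FS)-spaces, so that the dual sequence-space is the one with the reciprocal (exponentially growing) weights; again (\ref{nejednakostM}) and (\ref{nejednakostM2}) are what let me trade a joint weight for a sum of two separate ones. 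I expect the main obstacle to be bookkeeping rather than conceptual: namely, checking carefully that the Beurling (all-$h,l$) and Roumieu (some-$h,l$) quantifier patterns survive intact under the weight-splitting inequalities and under the renormalization $\lambda_j^{1/\nu}\rightsquigarrow j$, since each application of (\ref{nejednakostM2}) introduces a dilation $\lambda^a$ of the parameter and an additive constant, and one must verify these can be chosen uniformly so that no quantifier is inadvertently strengthened or weakened. Once that normalization is pinned down, the whole proposition reduces to the nuclear-tensor-product identification together with the already-established one-variable characterizations.
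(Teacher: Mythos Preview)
Your approach is correct but takes a genuinely different, more abstract route than the paper. The paper works concretely: starting from the mixed derivative estimate
\[
\big|x^p\partial_x^\alpha\partial_\theta^\beta a(\theta,x)\big|\leq C\,h^{|\alpha+\beta+p|}M_{|\alpha+p|}M_{|\beta|},
\]
which characterizes membership in the tensor product, it forms the partial coefficients $a_i(\theta)=\int_{\mathbb R^m}a(\theta,x)u_i(x)\,dx$ and applies the one-variable result from \cite{VV2} (with $\theta$ carried as a parameter) to obtain $|\partial_\theta^\beta a_i(\theta)|\leq C\,h^{|\beta|}M_{|\beta|}\,e^{-M(h\mu_i^{1/q})}$ uniformly in $\theta$. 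This shows each $a_i\in\mathcal E^*(\mathbb S^{n-1})$ with uniform control in $i$, and the sphere-side characterization from \cite{VV1} then yields the spherical-harmonic decay of $a_{k,j,i}$. Repeating with the order of integration reversed gives the second iterated sum in (\ref{br}), and the two agree because both recover the same Fourier coefficients. No structure theory of nuclear tensor products is invoked; it is a direct two-step application of the factor characterizations.

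Your route via the K\"othe-space identification $\lambda(A)\widehat\otimes\lambda(B)\cong\lambda(A\otimes B)$ is sound and shorter, once one is prepared to cite that machinery. One remark: your emphasis on (\ref{nejednakostM}) and (\ref{nejednakostM2}) to ``trade a joint weight for a sum of two separate ones'' is misplaced here---the tensor-product sequence space already carries the product weight $e^{-M(h\mu_i^{1/q})}e^{-M(lj)}$, which is exactly the form asserted; there is no intermediate single weight in $(i,j)$ to split. Those inequalities would be needed only for a bound of the shape $e^{-M(c(\mu_i^{1/q}+j))}$, which does not occur. Your normalization $\lambda_j^{1/\nu}\rightsquigarrow j$ is fine (for the Laplace--Beltrami operator $\nu=2$ and $\lambda_j^{1/2}\asymp j$). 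What your sketch leaves slightly loose is the concrete assertion in the statement that the inner bracketed sums converge \emph{in the topologies of} $\mathcal E^*$ and $\mathcal S^*$: this does follow from the coefficient decay together with the one-variable characterizations, but that is precisely what the paper's hands-on argument makes explicit, and you should spell it out rather than subsume it under ``Fubini in nuclear spaces.''
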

 Note that by \cite{GPR} it follows that with suitable $C>0$,

$$
 \mu_i^{1/q}\sim  Ci^{1/2m}, i\rightarrow \infty.
$$
\begin{proof}
We give the proof for test functions in the Roumieu case. The Beurling case is similar.

Fourier coefficients of $a\in \mathcal E^{\{M_p\}}(\mathbb S^{n-1})\widehat{\otimes}\mathcal S^{\{M_p\}}(\mathbb R^{m})$ are given by
 $$a_{k,j,i}=\int_{\mathbb S^{n-1}\times \mathbb R^{m}}  a(\theta, x)  u_i(x)Y_{k,j}(\theta) dxd\theta,\; i\in\mathbb N_0, j\in\mathbb N_0, k\leq N_j.$$
Recall that $a$ satisfies
$$
\Big|x^p \partial_x^{\alpha}\big( \partial_{\theta}^{\beta}a(\theta,x)\big)\Big|\leq C_1h_1^{|\alpha+\beta+p|}M_{|\alpha+p|}M_{|\beta|},
$$
\begin{eqnarray}\label{jednacina1.5}\theta\in\mathbb S^{n-1}, x\in\mathbb R^m, p,\alpha\in\mathbb N_0^{m}, \beta\in\mathbb N_0^{n-1}.
\end{eqnarray}
Set
$a(\theta,x)=\sum_{i=1}^{\infty} a_i(\theta)u_i(x), \theta\in\mathbb S^{n-1}, x\in\mathbb R^m,$
 where $$a_i(\theta)=\int_{\mathbb R^n} a(\theta,x) u_i(x)dx, \theta\in\mathbb S^{n-1},\;  i\in\mathbb N.$$
Let $\beta=0$ in (\ref{jednacina1.5}).  
  By the careful examination of \cite[Theorem $4.1$]{VV2} and (\ref{vazno1}), we  conclude that   for some $h_2$ and $C_2>0$,$$|a_i(\theta)|\leq C_2 e^{-M(h_2 \mu_i^{{1}/{q}})}, \; i\in\mathbb N, $$ uniformly for $\theta\in{\mathbb S}^{n-1}$  ($q$ is order of $P$). What is more, since $$\partial_{\theta}^{\beta}a(x,\theta)=\sum_{i=0}^{\infty} \partial_{\theta}^{\beta} a_i(\theta)u_i(x),$$ the same argument leads to the conclusion that for some $h_3, h_4, C_3>0$, $$|\partial_{\theta}^{\beta} a_i(\theta)|\leq C_3 h_3^{|\beta|}  M_{|\beta|}e^{-M(h_4 \mu_i^{{1}/{q}})}, \;\theta\in\mathbb S^{n-1}, \beta\in\mathbb N_0^{n-1}.$$
This basically proves the ultradifferentiability of the function $a_i$ on the unit sphere, for every $i\in\mathbb N_0.$ It remains to utilize \cite[Theorem $4.1$]{VV1}
  in order to prove that coefficients of
$$a_i(\theta)=\sum_{j=0}^{\infty}\sum_{k=0}^{N_j} a_{k,j,i} Y_{k,j}(\theta),\; \theta\in\mathbb S^{n-1},$$
 satisfy:  $\forall i\in\mathbb N_0,\exists h_5,C_5>0,$
$$|a_{k,j,i}|\leq C_5  e^{-M(h_5\mu_i^{{1}/{q}})-M(lj)},\; j\in\mathbb N_0,  \mbox{ for some } h_5>0, C_5>0.$$
 Note that, for $i\in\mathbb N_0, j\in\mathbb N_0, k\leq N_j,
 $
 $$
 a_{k,j,i}=\int_{\mathbb S^{n-1}} a_i(\theta) Y_{k,j}(\theta) d\theta=\int_{\mathbb S^{n-1}} \Big(\int_{\mathbb R^{m}} a(\theta, x) f_i(x)dx\Big)Y_{k,j}(\theta)d\theta. $$
 This shows the uniform convergence on the term in bracket on the left side of  (\ref{br}). A different approach, differentiating  the sum
$$a(\theta,x)=\sum_{j=0}^{\infty}\sum_{k=0}^{N_j} a_{k,j}(x)Y_{k,j}(\theta),\theta\in\mathbb S^{n-1}, \; x\in\mathbb R^m,$$
first in $x$ and than in $\theta$ in (\ref{jednacina1.5}), where $$a_{k,j}(x)=\int_{\mathbb S^{n-1}} a(\theta, x) Y_{k,j}(\theta)d\theta, j\in\mathbb N_0, k\leq j,$$
leads to the same conclusion for the right hand side of (\ref{br}). This completes the proof of (\ref{br}).

Using the first part of this proposition, we are able to characterize the coefficients in the expansion of an arbitrary element of

\noindent $ \mathcal E'^{\{M_p\}}(\mathbb S^{n-1})\widehat{\otimes}\mathcal S'^{\{M_p\}}(\mathbb R^m)$, by (\ref{111L}). We  skip this part.
\end{proof}

\subsection{Renumbering of eigenvalues of  $E=E_y$}\label{Sec4.1}
Now, we  rearrange the sequence of eigenvalues for $E$.
 Eigenvalues  $|s|, s\in\mathbb N_0^n$
are re-indexed as follows.
Let $e=(e_1,...,e_n)\in\mathbb N_0^n.$ We start with $j_0=0$, which corresponds to $|e|=0$. The number of points $e\in\mathbb N^n$ so that $|e|= t$ is equal to
${{n+t-1}\choose{t-1}}.$
Thus, the first coordinate with $|e|= t+1$ takes the position
$$j=1+n+{{n+1}\choose{2}}+...{{n+t-1}\choose{t-1}}=1+{{n+t}\choose{t}},\; t\in\mathbb N,$$
that is,
we obtain that the coefficient with index $|n|=t+1$ correspond to $$j\in\Bigl\{1+{{n+t}\choose{t}},2+{{n+t}\choose{t}},...,{{n+t+1}\choose{t+1}}\Bigr\}$$

\noindent There exist $C>0$ and $c>0$ such that
$c t^n \leq {{n+t}\choose{t}}\leq C t^n.
$
we infer (with new suitable $c>0$) that
$$E\ell_p=\nu_p\ell_p, p\in\mathbb N_0, \mbox{ where } \nu_p\sim c p^{1/n} \mbox{ as } p\rightarrow \infty.
 $$
 Thus, we reformulate the following
necessary and sufficient condition:
$$\phi=\sum_{p=0}^\infty a_p\ell_p\in \mathcal G^*(\mathbb R^n_+)
\mbox{ if and only if }$$
$$ \;\forall r>0, resp.,\exists r>0, \exists C=C_r>0,\;\;
 |a_p|\leq Ce^{-M(rp^{1/n})},\;p\in\mathbb N_0.
$$
Also,
$$f=\sum_{p=0}^\infty b_p\ell_p\in \mathcal G'^*(\mathbb R^n_+)
\mbox{ if and only if }$$
$$ \;\exists r>0, resp.,\forall r>0, \exists C=C_r>0,\;\;
 |b_p|\leq Ce^{M(rp^{1/n})},\; p\in\mathbb N_0.
$$

Now we give the structural theorem in the more general case.
We first prepare notation as follows. From now on, eigenvalues of $A_j $ will be denoted by $\lambda_{j}^s,\; s=1,...,k,\; j\in\mathbb N_0.$


\begin{prop}\label{gen-karakterizacija}
Space  $\mathcal H^*(\prod_{s=1}^kX_s\times\mathbb R^n_+\times\mathbb R^m)$ consists of smooth functions of the form
$$\phi(t_1,...,t_k,y,x)=\sum_{j^1=0,...,j^{k}=0,p=0,i=0}^{\infty,...,\infty}a_{j^1,...,j^k,p,i}v_{j^1}(t_1)...v_{j^k}(t_k)\ell_p(y)u_i(x),
$$
where $t_s\in X_s, s=1,...,k,$ $y\in\mathbb R^n_+$, $x\in\mathbb R^m$ and the sums are equal for any order of  summation. There holds,
\begin{equation*}\label{gen11L}\exists h>0, \exists r>0, \exists l^s>0,\; s=1,...,k,\end{equation*}
$$\mbox{ respectively, } \forall h>0, \forall r>0, \forall l^{s}>0, \;s=1,...,k, \exists C=C_{h,r}>0,
$$
$$|a_{j^1,...,j^k,p,i}|\leq  C e^{-\big(\sum_{s=1}^k M(l^s{j^s})+M(rp^{1/n})+M(h\mu_i^{1/q})\big)}.$$

Next, $U\in \mathcal H'^*(\prod_{s=1}^kX_{s})\times\mathbb R^n_+\times\mathbb R^m)$,  of the form
\begin{multline*}U(t_1,...,t_k,y,x)=\sum_{j^1=0,...,j^{k}=0,p=0,i=0}^{\infty,...,\infty}b_{j^1,...,j^k,p,i}v_{j^1}(t_1)...v_{j^k}(t_k)\ell_p(y)u_i(x),                                     \\ t_s\in X_s, s=1,\dots, k,y\in\mathbb R_{+}^n,\; x\in\mathbb R^m,
\end{multline*}
if and only  if
\begin{equation*}\label{gengen11L}\forall h>0, \forall r>0, \forall l^s>0,\; s=1,...,k,\end{equation*}
$$ \mbox{ respectively, } \exists h>0, \exists r>0, \exists l^{s}>0,\; s=1,...,k, \exists C=C_{h,r}>0,
$$
$$|b_{j^1,...,j^k,p,i}|\leq  C e^{\sum_{s=1}^k M(l^sj^s)+M(rp^{1/n})+M(h\mu_i^{1/q})}.$$

\end{prop}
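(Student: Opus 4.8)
The plan is to reduce the statement to the three single-factor characterizations already at our disposal and to combine them by peeling off one tensor factor at a time, exactly as in Proposition \ref{karakterizacija} but now iterated over the $k+2$ factors. The three ingredients are: the sequence-space description of $\mathcal M_s^*(X_s)$ and $\mathcal M_s'^*(X_s)$ from \cite{DR1}, \cite{DR2} recalled in Section \ref{Sec2}; the description of $\mathcal G^*(\mathbb R^n_+)$ and $\mathcal G'^*(\mathbb R^n_+)$ from the equivalence theorem ending Section \ref{Sec3.2} together with the renumbering of Section \ref{Sec4.1} (which yields $\nu_p\sim c\,p^{1/n}$); and the Shubin-type description of $\mathcal S^*(\mathbb R^m)$, $\mathcal S'^*(\mathbb R^m)$ from \cite{GPR}. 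Since every factor space is nuclear (a (DFS)- resp. (FS)-space), the completed $\pi$-tensor product coincides with the $\varepsilon$-tensor product, the coefficients $a_{j^1,\dots,j^k,p,i}$ are obtained by integrating $\phi$ against the tensor orthonormal system, and the resulting multiple series converges absolutely; the latter immediately gives independence of the order of summation once the decay estimate is established.

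First I would fix $\phi\in\mathcal H^{\{M_p\}}$ and expand only in the $\mathbb R^m$-variable, writing $\phi=\sum_{i}\phi_i(t_1,\dots,t_k,y)\,u_i(x)$ with $\phi_i=\int_{\mathbb R^m}\phi\,u_i\,dx$. Applying the Shubin characterization \cite{GPR} in the uniform form of \cite{VV2} in the $x$-variable yields, for some $h>0$, a bound $\|\phi_i\|\leq C e^{-M(h\mu_i^{1/q})}$ in the relevant seminorm of the remaining variables, uniformly, and this bound survives after applying any operator acting only on $(t_1,\dots,t_k,y)$; the crucial point is that each $\phi_i$ again belongs to $\otimes_{s=1}^k\mathcal M_s^*\widehat\otimes\mathcal G^*$ with seminorms controlled by $C e^{-M(h\mu_i^{1/q})}$. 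I would then iterate: expand each $\phi_i$ in the Laguerre basis $\ell_p(y)$ and invoke the $\mathcal G^*$-characterization, producing the factor $e^{-M(rp^{1/n})}$; and finally expand in the bases $v_{j^s}(t_s)$ on each $X_s$ and invoke the $\mathcal M_s^*$-characterization, producing the factors $e^{-M(l^s j^s)}$, after converting the weight $M(l(\lambda_{j^s}^s)^{1/\nu_s})$ into a weight in $j^s$ via the eigenvalue asymptotics of $A_s$, just as for the sphere in Proposition \ref{karakterizacija}. Collecting the factors gives the bound $e^{-(\sum_s M(l^s j^s)+M(rp^{1/n})+M(h\mu_i^{1/q}))}$, the splitting and recombining of the associated function being handled by the sub-additivity inequalities of Lemma \ref{koren}.

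The main obstacle is the uniformity of the intermediate estimates. One must check that at each peeling step the constant and the parameter in the exponential weight for the variable just expanded can be chosen independently of the indices already fixed, and that the seminorm estimates in the remaining variables do not deteriorate. This is exactly the content of the uniform eigenfunction-expansion characterizations of \cite{GPR}, \cite{VV1}, \cite{VV2}, and it is what legitimizes the single product of weights with one common set of constants. The two-factor case is carried out in full in Proposition \ref{karakterizacija}, so the only genuinely new feature is the bookkeeping over the $k$ manifold factors, which is routine once the single step is known to be uniform.

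Finally, the dual assertion follows by transposition. Each factor space is nuclear and reflexive, so the strong dual of $\mathcal H^*$ is the completed tensor product of the duals, whose coefficient characterizations are precisely those recalled in Section \ref{Sec2} (for $\mathcal M_s'^*$) and established in Section \ref{Sec3.2} and Section \ref{Sec4.1} (for $\mathcal G'^*$ and $\mathcal S'^*$). Writing the pairing as $\langle U,\phi\rangle=\sum b_{j^1,\dots,j^k,p,i}\,\overline{a_{j^1,\dots,j^k,p,i}}$ and using the test-function estimate just proved, the growth bound $|b_{j^1,\dots,j^k,p,i}|\leq C\,e^{\sum_s M(l^s j^s)+M(rp^{1/n})+M(h\mu_i^{1/q})}$ is equivalent to the boundedness of this pairing over all $\phi$ in a bounded set, which is the desired characterization of $\mathcal H'^*$. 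Independence of the order of summation again follows from the absolute convergence guaranteed by the matched decay and growth weights.
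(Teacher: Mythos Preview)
Your proposal is correct and follows essentially the same approach as the paper: the paper's own proof consists of the single sentence ``For the proof (which can be given by induction) one has to repeat arguments used in the proof of Proposition \ref{karakterizacija}. We skip this part.'' Your write-up is precisely that induction, peeling off one tensor factor at a time and invoking the uniform single-factor characterizations, so you have supplied the details the paper omits rather than taken a different route.
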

For the proof  (which can be given by  induction) one has to  repeat  arguments used in the proof of  Proposition \ref{karakterizacija}. We skip this part.

 \section{Equations}\label{Sec5}


\subsection{Equation $L_{3} u=f $} Our starting point is the  equation (\ref{123}).
 Let    $f\in \mathcal E'^*(\mathbb S^{n_1-1}\times\mathbb R_+^{n_2}\times\mathbb R^{n_3}),\; f\neq 0,$ have the form
 \begin{multline}\label{formf3}\displaystyle f(\theta,x,y)=\displaystyle \sum_{j\in\mathbb N_0, k\leq N_j, p\in \mathbb N_0, i\in\mathbb N_0} b_{k,j,p,i} Y_{k,j}(\theta) \ell_p(y)u_i(x),\\
 \theta\in\mathbb S^{n_1-1},\ y\in\mathbb R_+^{n_2},\ x\in \mathbb R^{n_3}.
 \end{multline}

We assume for (\ref{123}) that $c_1, c_2, c_3$ are different from zero and that $c_1=1$ so that it becomes
\begin{equation}\label{tri}  
\Delta_{\mathbb S^{n-1}}^{h}+c_2E^d+c_3Pu=f
\end{equation} 
 Other possible cases can be treated in a similar way. The solution  is expected in the form
\begin{equation}\label{formu3}
u(\theta,y,x)=\sum_{i=1}^{\infty}\sum_{j=0}^{\infty}\sum_{k=0}^{N_j} a_{k,j,p,i} Y_{k,j}(\theta)\ell_p(y)u_i(x), \theta\in\mathbb S^{n-1}, y\in\mathbb R^{n}_+, x\in\mathbb R^{m}.
\end{equation}

Simple  calculation  gives
 $$
 \displaystyle \sum_{j\in\mathbb N_0, k\leq N_j, p\in \mathbb N_0, i\in\mathbb N_0} a_{k,j,p,i}(j^h(j+n_1-2)^h+ c_2\nu_p^{d}+c_3\mu_i) Y_{k,j}(\theta) \ell_p(y)u_i(x)
 $$
 $$=
 \displaystyle \sum_{j\in\mathbb N_0, k\leq N_j, p\in \mathbb N_0, i\in\mathbb N_0} b_{k,j,p,i}Y_{k,j}(\theta) \ell_p(y)u_i(x).$$
 (Recall, $\nu_p$ are eigenvalues of $E_y$ and $\mu_i$ are eigenvalues of $P$). Formally,
 $$ a_{k,j,p,i}=\frac{b_{k,j,p,i}}{j^h(j+n_1-2)^h+ c_2\nu_p^{d}+c_3\mu_i}.$$
 Equations
\begin{equation}\label{3us} j^h(j+n_1-2)^h+ c_2\nu_p^{d}+c_3\mu_i=0, \; i\in\mathbb N_0, p\in\mathbb N_0, j\in\mathbb N_0, k\leq N_j,
\end{equation}
have the main role for the solvability.
\
Let
$$F_3=\{(i,p,j)\in\mathbb N_0^{3}: (\ref{3us}) \mbox{ holds}\},
$$
$$  E_{3}=\{(i,p,j)\in\mathbb N_0^{3}: b_{k,j,p,i}= 0\}.$$
We assume in the sequel that $k\leq N_j$ is included in the definitions of index sets $F_3$ and $E_3.$
\begin{prop}
Condition $  F_3\subset E_3$ is the necessary one for the solvability of  (\ref{tri}).
\end{prop}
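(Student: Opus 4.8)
The plan is to exploit the fact that $L_3$ acts diagonally on the orthonormal basis $\{Y_{k,j}\ell_p u_i\}$ together with the uniqueness of the eigenexpansion provided by the structural theorem. Suppose (\ref{tri}) is solvable, i.e.\ there exists $u\in\mathcal H'^*$ with $L_3u=f$. By Proposition \ref{gen-karakterizacija} specialized to $X=\mathbb S^{n_1-1}$, the distribution $u$ has a \emph{unique} expansion of the form (\ref{formu3}) with coefficients $a_{k,j,p,i}$, while $f$ has its unique expansion (\ref{formf3}) with coefficients $b_{k,j,p,i}$. Writing $\Lambda_{j,p,i}:=j^h(j+n_1-2)^h+c_2\nu_p^d+c_3\mu_i$ for the common eigenvalue appearing in (\ref{3us}), each basis function is an eigenfunction of $L_3$, since $\Delta_{\mathbb S^{n_1-1}}^h$, $E^d$ and $P$ act in the $\theta$-, $y$- and $x$-variables respectively:
\begin{equation*}
L_3\big(Y_{k,j}(\theta)\ell_p(y)u_i(x)\big)=\Lambda_{j,p,i}\,Y_{k,j}(\theta)\ell_p(y)u_i(x).
\end{equation*}

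Next I would extract the coefficients of $L_3u=f$ by pairing against the basis. Using the definition of the action of $L_3$ on $\mathcal H'^*$ by duality, the fact that $Y_{k,j}\ell_p u_i$ is an eigenfunction, and that the eigenvalues $j(j+n_1-2)$, $\nu_p$, $\mu_i$ are real (so $\Lambda_{j,p,i}\in\mathbb R$), the coefficient of $L_3u$ relative to $Y_{k,j}\ell_p u_i$ equals $\Lambda_{j,p,i}\,a_{k,j,p,i}$. Matching with the corresponding coefficient of $f$ and invoking uniqueness of the representation yields the infinite scalar system
\begin{equation*}
a_{k,j,p,i}\,\Lambda_{j,p,i}=b_{k,j,p,i},\qquad i,p,j\in\mathbb N_0,\ k\leq N_j.
\end{equation*}

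The conclusion is then immediate. Fix any $(i,p,j)\in F_3$: by definition of $F_3$ this means precisely that $\Lambda_{j,p,i}=0$, so the scalar equation above forces $b_{k,j,p,i}=\Lambda_{j,p,i}\,a_{k,j,p,i}=0$ for every admissible $k\leq N_j$. Hence $(i,p,j)\in E_3$. Since $(i,p,j)\in F_3$ was arbitrary, $F_3\subset E_3$, which is exactly the asserted necessary condition.

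The only step requiring genuine care is the justification that $L_3$ may be transferred term by term onto the formal expansion of the distribution $u$, and that the coefficients of $L_3u$ are indeed $\Lambda_{j,p,i}\,a_{k,j,p,i}$. This is where the coefficient growth estimates and the uniqueness part of Proposition \ref{gen-karakterizacija} are essential: they guarantee that the expansion of $u$ converges in $\mathcal H'^*$, that the diagonal multiplier $\Lambda_{j,p,i}$ produces an admissible element of $\mathcal H'^*$, and that coefficients in two equal expansions may be identified. Everything else is a direct reading off of the scalar system.
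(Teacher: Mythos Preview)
Your proof is correct and follows the same approach the paper takes: in the paper the proposition is stated without a separate proof, the argument being the coefficient comparison displayed immediately before it, which yields the scalar system $a_{k,j,p,i}\Lambda_{j,p,i}=b_{k,j,p,i}$ and hence the inclusion $F_3\subset E_3$. You have simply made explicit the justification (uniqueness of the eigenexpansion from Proposition~\ref{gen-karakterizacija} and the diagonal action of $L_3$) that the paper leaves implicit.
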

If $F_3$ is empty, then
\begin{equation}\label{ab3} a_{k,j,p,i}=\frac{b_{k,j,p,i}}{j^{h}(j+n-2)^{h}+ c_2\nu_p^{d}+c\mu_i}\in\mathbb C,
\end{equation}
$$ i\in\mathbb N_0,p\in\mathbb N_0, j\in\mathbb N_0,\; k\leq N_j.
$$

\begin{rem} We will not treat the case when 
$$F_3\neq\emptyset, F_3\subset E_3 \mbox{ but } F_3\neq E_3.$$
The analysis which is to follow shows how one can analyse this case.
\end{rem}

For the next proposition we need:

 \begin{defn}\label{5.2}
A couple of real numbers $(c_2, c_3)$  is not an $R-$ respectively,  $B-$Liouville couple  with respect to the  sequences
$\nu_p^d$ and $\mu_i$  of eigenvalues of $E^d$ and $P(x,D)$
 iff for every $\varepsilon>0$, respectively, some $\varepsilon>0,$ there exist $C=C_{\varepsilon}>0,$ such that
\begin{equation}\label{liuvil1}
\inf_{\tau\in\mathbb Z} |\tau + c_2\nu_p^{d}+c_3\mu_i|\geq C_{\varepsilon} e^{-M(\varepsilon p)}e^{-M(\varepsilon \mu_i^{{1}/{q}})},
 p\in\mathbb N_0, i\in\mathbb N.
\end{equation}
\end{defn}
\begin{prop}\label{thm123}
Let $f\in \mathcal {H}'^*(\mathbb S^{n_1-1}\times\mathbb R^{n_2}_+\times\mathbb R^{n_3})$  be of the form (\ref{formf3}).

(i) Assume that $F_3=\emptyset$ and $E_3=\emptyset$
so that (\ref{ab3}) holds.

Suppose that  $(c_2,c_3)$ is not a $R$-Liouville, respectively, $B$-Liouville couple with respect to the sequences $\{\nu_p^{d}; p\in\mathbb N_0\}$ and $\{\mu_i; i\in\mathbb N_0\}$.  Then, $u$ is the unique solution to (\ref{123}) with the coefficients of  the form (\ref{formu3}).
Moreover,  if $f\in \mathcal {H}^*( \mathbb S^{n_1-1}\times\mathbb R^{n_2}\times\mathbb R^{n_3})$, then $u\in \mathcal {H}^*\mathbb ( \mathbb S^{n_1-1}\times\mathbb R^{n_2}\times\mathbb R^{n_3})$, as well.

(ii) Assume that $E_3=F_3\neq\emptyset.$ and that (\ref{liuvil1}) holds over the complement of the set $E_3.$

a) Assume $card\;E_3<\infty$. Then the solutions exist with \\  $a_{k,j,p,i}, (i,p,j)\in F_3$ arbitrary chosen. Moreover for these solutions, the hypoellipticity holds if $f\in \mathcal E'^*(\mathbb S^{n_1-1}\times\mathbb R_+^{n_2}\times\mathbb R^{n_3}).$ 

b) Assume $card\;E_3=\aleph_0.$ Then for the solvability and hypoellipticity one has to assume conditions of assertion $(ii)$ so that the same conclusions as in (ii) hold.
 
\end{prop}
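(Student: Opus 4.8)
The plan is to pass from the operator equation to the diagonal system of scalar coefficient equations and then to read off membership in $\mathcal{H}^*$, respectively $\mathcal{H}'^*$, from the growth/decay of the coefficients via Proposition \ref{gen-karakterizacija} (with $k=1$, $X_1=\mathbb{S}^{n_1-1}$). Substituting (\ref{formu3}) into (\ref{tri}) and using $\Delta_{\mathbb{S}^{n_1-1}}^{h}Y_{k,j}=j^h(j+n_1-2)^hY_{k,j}$, $E^d\ell_p=\nu_p^d\ell_p$ and $Pu_i=\mu_iu_i$, comparison with (\ref{formf3}) yields the scalar equations $D_{j,p,i}\,a_{k,j,p,i}=b_{k,j,p,i}$, where $D_{j,p,i}=j^h(j+n_1-2)^h+c_2\nu_p^d+c_3\mu_i$. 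In case (i), $F_3=\emptyset$ forces $D_{j,p,i}\neq 0$ for all indices, so the coefficients are uniquely given by (\ref{ab3}); this settles existence and uniqueness of the formal solution at once.

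For solvability in the dual I would exploit that $\tau:=j^h(j+n_1-2)^h\in\mathbb{Z}$, so that the non--Liouville inequality (\ref{liuvil1}) applies with this $\tau$ and gives, uniformly in $j$, the lower bound $|D_{j,p,i}|\geq C_\varepsilon e^{-M(\varepsilon p)-M(\varepsilon\mu_i^{1/q})}$. Hence $|a_{k,j,p,i}|\leq C_\varepsilon^{-1}e^{M(\varepsilon p)+M(\varepsilon\mu_i^{1/q})}|b_{k,j,p,i}|$. Inserting the growth estimate for $\{b_{k,j,p,i}\}$ furnished by $f\in\mathcal{H}'^*$ and merging the exponentials by means of the quasi-subadditivity of $M$ (Lemma \ref{koren} a), b)) produces the growth estimate characterising $\mathcal{H}'^*$, so $u\in\mathcal{H}'^*$ is the unique solution. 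The Roumieu/Beurling quantifiers fall into place because ``not $R$-Liouville'' supplies a bound for every $\varepsilon$ and ``not $B$-Liouville'' for some $\varepsilon$, which is exactly dual to the $\forall/\exists$ pattern in the coefficient characterisation.

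The hypoellipticity assertion is the same computation run in the opposite regime: if $f\in\mathcal{H}^*$ then $\{b_{k,j,p,i}\}$ decays like $e^{-M(lj)-M(rp^{1/n})-M(h\mu_i^{1/q})}$, and the Liouville loss $e^{M(\varepsilon p)+M(\varepsilon\mu_i^{1/q})}$ is subexponential against this decay once $\varepsilon$ is chosen small. Using Lemma \ref{koren} b) to dominate $M(\varepsilon\,\cdot\,)$ by $\tfrac{1}{2}M(h\,\cdot\,)+C$ for $\varepsilon$ small absorbs the loss and leaves a genuine decay estimate for $\{a_{k,j,p,i}\}$, whence $u\in\mathcal{H}^*$. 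Parts (ii)(a) and (ii)(b) are then handled by splitting the index set along $E_3=F_3$: off $E_3$ the coefficients are forced and controlled exactly as above (the hypothesis supplies (\ref{liuvil1}) there), while on $E_3$ both sides of the scalar equation vanish and $a_{k,j,p,i}$ is free. When $\operatorname{card}E_3<\infty$ the free part is a finite combination of basis elements, hence a bona fide test function that cannot spoil membership; when $\operatorname{card}E_3=\aleph_0$ one must in addition impose a growth (for solvability), respectively decay (for hypoellipticity), bound on the freely chosen coefficients over $E_3$ --- e.g.\ the zero choice --- to reproduce the conclusions of (i).

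The crux is the exponential bookkeeping of the middle two paragraphs: one must ensure that the loss $e^{M(\varepsilon p)+M(\varepsilon\mu_i^{1/q})}$ coming from the near-vanishing of $D_{j,p,i}$ is compatible with the $p^{1/n}$-scaling of the Laguerre characterisation and with the $\mu_i^{1/q}$-scaling of the Shubin part. Here the non-Liouville hypothesis does the real work: the three contributions $j^h(j+n_1-2)^h\in\mathbb{Z}$, $c_2\nu_p^d$ and $c_3\mu_i$ live on incommensurable scales, and (\ref{liuvil1}) is precisely what prevents them from combining so as to make $D_{j,p,i}$ decay faster than any $e^{-M(\varepsilon\,\cdot\,)}$, which would destroy both the dual estimate and the regularity transfer.
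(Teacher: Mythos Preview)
Your proposal is correct and follows essentially the same approach as the paper's own proof: reduce $L_3u=f$ to the diagonal scalar system $D_{j,p,i}\,a_{k,j,p,i}=b_{k,j,p,i}$, apply the non--Liouville lower bound (\ref{liuvil1}) with $\tau=j^h(j+n_1-2)^h\in\mathbb Z$ to control $|D_{j,p,i}|^{-1}$, multiply through, and invoke the coefficient characterisation of $\mathcal H'^*$ (resp.\ $\mathcal H^*$) from Proposition~\ref{gen-karakterizacija}. You are in fact more explicit than the paper on several points---the integrality of $\tau$, the Roumieu/Beurling quantifier matching, the treatment of the free coefficients on $E_3$, and the scaling compatibility between the Liouville loss $e^{M(\varepsilon p)}$ and the Laguerre weight $e^{M(rp^{1/n})}$---whereas the paper's argument is a terse two-line computation that records the bound $|a_{k,j,p,i}|\le C_\varepsilon^{-1}|b_{k,j,p,i}|e^{M(\varepsilon p)+M(\varepsilon\mu_i^{1/q})}$ and then appeals directly to the assumptions on $b_{k,j,p,i}$.
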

\begin{proof} (i)
We prove the assertion in the Roumieu case since the proof in the Beurling case is  similar.

 By the use of (\ref{liuvil1}) one has
 that $u$ of the form (\ref{formu3}), satisfies: for every $\varepsilon>0$ and corresponding $C_\varepsilon>0,$
$$|a_{k,j,p,i}| = \frac{|b_{k,j,p,i}|}{j^h(j+n-2)^h+c_2\nu_p^d+c_3\mu_i}\leq|b_{k,j,i}|  e^{M(\varepsilon p)}e^{M(\varepsilon \mu_i^{{1}/{q}})}/C_\varepsilon,$$
$$ i\in\mathbb N_0, p\in\mathbb N_0, j\in\mathbb N_0, k\leq N_j.$$  This is a direct consequence of  the assumptions on $b_{k,j,p,i}$ combined with  the fact that $(c_2,c_3)$ is not an $R-$ Liouville couple. Thus,
   $u\in\mathcal {H}'^{\{M_p\}} (\mathbb S^{n-1}\times\mathbb R^n_+\times\mathbb R^{m}).$ Using the properties of $b_{k,j,p,i}$ for the hypoellipticity, it follows that $u\in\mathcal {H}^{\{M_p\}} (\mathbb S^{n-1}\times\mathbb R^n_+\times\mathbb R^{m})$ if $f$ is in this space, as well.

(iii) a) One can see that  excluding  indices in $E_3$ one can apply the assertion (i). Moreover, arbitrary $a_{k,j,p,i},$ with indices in $E_3,$  can be chosen.

b) The proof is the same as in (ii) b).
\end{proof}

\subsection{General case}\label{Sec5.1}

Now we are considering equation (\ref{jedn}).

Assume that $u\in\mathcal H'^*(\prod_{s=1}^kX_{s}\times\mathbb R^n_+\times\mathbb R^m)$ has the form

$$U(t_1,...,t_k,y,x)=\sum_{j^1=0,...,j^{k}=0,p=0,i=0}^{\infty,...\infty}a_{j^1,...,j^k,p,i}v_{j^1}(t_1)...v_{j^k}(t_k)\ell_p(y)u_i(x),
$$
 $$t_s\in X_s,\; s=1,2,\dots, k, \theta\in\mathbb S^{n-1}, y\in\mathbb R^{n}_+,\; x\in\mathbb R^{m}. $$

Then equation (\ref{jedn}) gives
\begin{multline*}\label{genjedn}
LU=\sum_{j^1=0,...,j^{k}=0,p=0,i=0}^{\infty,...\infty}a_{j^1,...,j^k,p,i}
(a_1(\lambda^1_{j^1})^{h_1}+...+a_k(\lambda^k_{j^k})^{h_k}+b\mu_p^{d}+c\mu_i)
\\ v_{j^1}(t_1)...v_{j^k}(t_k)\ell_p(y)u_i(x),    \\ t_s\in X_s, s=1,2,\dots, k, \theta\in\mathbb S^{n-1}, y\in\mathbb R^{n}_+, x\in\mathbb R^{m}                  .
\end{multline*}
Let  $f\in\mathcal H'^*(\prod_{s=1}^kX_s\times\mathbb R^n_+\times\mathbb R^m)$ have the form
$$f(t_1,...,t_k,y,x)=\sum_{j^1=0,...,j^{k}=0,p=0,i=0}^{\infty,...\infty}b_{j^1,...,j^k,p,i}v_{j^1}(t_1)...v_{j^k}(t_k)\ell_p(y)u_i(x).
$$
 We consider those indices for which 
 \begin{equation}\label{nus}
 a_1(\lambda^1_{j^1})^{h_1}+...+a_k(\lambda^k_{j^k})^{h_k}+b\nu_p^{d}+c\mu_i=0,
 \end{equation}
 holds and indices for which
 $b_{j^1,...,j^k,p,i}=0$.
  Let 
$$F_{k+2}=\{(j^1,...j^k,p,i)\in\mathbb N_0^{k+2}: (\ref{nus}) \mbox{ holds}\},
$$
$$ E_{k+2}=\{(j^1,...,j^k,p,i)\in\mathbb N_0^{k+2}:  b_{j^1,...,j^k,p,i}= 0\}.$$

The coefficients of  a solution $U$ of (\ref{jedn}) should satisfy the infinite  system of equations, for $(j^1,...j^k,p,i)\in\mathbb N_0^{k+2},$
$$a_{j^1,...,j^k,p,i} [a_1(\lambda^1_{j^1})^{h_1}+...+a_k(\lambda^k_{j^k})^{h_k}+b\nu_p^d
+c\mu_i]=b_{j^1,...,j^k,p,i}.
$$
This immediately gives:
\begin{prop}
The necessary condition for the solvability of (\ref{jedn}) is that $F_{k+2}\subset E_{k+2}$.
\end{prop}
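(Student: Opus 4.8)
The plan is to read the claim directly off the infinite system of coefficient equations displayed just above the statement, the only point requiring care being the justification that this system is equivalent to the operator equation (\ref{jedn}). First I would invoke Proposition \ref{gen-karakterizacija}: every element of $\mathcal H'^*(\prod_{s=1}^kX_s\times\mathbb R^n_+\times\mathbb R^m)$ admits a unique expansion in the orthonormal basis $\{v_{j^1}\cdots v_{j^k}\ell_p u_i\}$, so two such elements coincide precisely when their coefficient families agree. This reduces the equality $LU=f$ to an equality of coefficient sequences.

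Next I would use that $L$ acts diagonally on this basis. Since each $A_s$ has $v_{j^s}$ as an eigenfunction with eigenvalue $\lambda^s_{j^s}$, and likewise $E\ell_p=\nu_p\ell_p$ and $Pu_i=\mu_i u_i$, applying $L=\sum_{s=1}^k a_sA_s^{h_s}+cE^d+bP$ termwise to the expansion of $U$ multiplies the $(j^1,\dots,j^k,p,i)$-th coefficient by the factor $a_1(\lambda^1_{j^1})^{h_1}+\cdots+a_k(\lambda^k_{j^k})^{h_k}+b\nu_p^d+c\mu_i$. Matching against the expansion of $f$ through uniqueness yields exactly the displayed system $a_{j^1,\dots,j^k,p,i}[\,\cdots\,]=b_{j^1,\dots,j^k,p,i}$ for every multi-index.

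Finally, for the necessity, suppose $U$ solves (\ref{jedn}) and fix any index $(j^1,\dots,j^k,p,i)\in F_{k+2}$. By definition of $F_{k+2}$ the bracketed factor vanishes at this index, so the left-hand side of the corresponding equation equals $0$ regardless of the value of $a_{j^1,\dots,j^k,p,i}$; hence $b_{j^1,\dots,j^k,p,i}=0$, i.e.\ the index lies in $E_{k+2}$. As the index was arbitrary, this gives $F_{k+2}\subset E_{k+2}$.

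The only genuine obstacle is the legitimacy of the termwise action of $L$ on an ultradistributional series. I expect this to be dispatched by the continuity of each $A_s^{h_s}$, $E^d$ and $P$ on $\mathcal H'^*$ together with their diagonal eigenaction, so that $L$ may be applied to the expansion term by term; the coefficient bound of Proposition \ref{gen-karakterizacija} for $U$ guarantees that the resulting series again represents an element of $\mathcal H'^*$, and uniqueness of the expansion then licenses the coefficient comparison. Everything past that is the one-line observation that a vanishing factor forces the matching right-hand coefficient to vanish.
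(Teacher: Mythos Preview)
Your argument is correct and is exactly the approach the paper takes: the paper simply writes ``This immediately gives'' and states the proposition, treating it as the one-line observation that in the displayed coefficient system a vanishing factor on the left forces the matching $b$-coefficient to vanish. Your added justification via Proposition~\ref{gen-karakterizacija} for the legitimacy of the coefficient comparison is sound and merely makes explicit what the paper leaves implicit.
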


\begin{rem} We   repeat the part of the discussion of the previous subsection but we do   not extend Definition \ref{5.2}  since  this is technically more demanding.  Moreover, as in Proposition \ref{123} we will consider the case $\emptyset\neq F_{k+2}=E_{k+2}$ with additional assumptions and  note that   the case when
$F_{k+2}\neq\emptyset, F_{k+2}\subset E_{k+2} \mbox{ but } F_{k+2}\neq E_{k+2},$ can be treated in a similar way.
\end{rem}

We consider the following assumption on $(a_1,...,a_k,b,c)$ (see (\ref{nus})) 
for $(j^1,...,j^k,p,i)\in E_{k+2}:$
\begin{equation}\label{josj}\forall \varepsilon>0, \mbox{ respectively }\exists \varepsilon>0, \exists C=C_\varepsilon>0,
\end{equation}
$$
|a_1(\lambda^1_{j^1})^{h_1}+...+a_k(\lambda^k_{j^k})^{h_k}+b\nu_p^{d}+c\mu_i|\geq C
e^{-\sum_{s=1}^kM(\varepsilon j^s)-M(\varepsilon p^{1/n})-M(\varepsilon\mu_i^{1/q})}.
$$
\begin{cor}Assume $F_{k+2}=E_{k+2}=\emptyset.$ Then eqution (\ref{jedn})
has a unique solution if (\ref{josj}) holds. Moreover, (\ref{jedn}) is hypoelliptic if (\ref{josj}) holds.
\end{cor}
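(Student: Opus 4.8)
The plan is to mirror the argument of Proposition~\ref{thm123}(i), now carried out over the $(k+2)$-fold index set and invoking the sequence-space description of $\mathcal H'^*$ and $\mathcal H^*$ from Proposition~\ref{gen-karakterizacija}. Write $Q_{j^1,\dots,j^k,p,i}=a_1(\lambda^1_{j^1})^{h_1}+\cdots+a_k(\lambda^k_{j^k})^{h_k}+b\nu_p^d+c\mu_i$ for the denominator appearing in (\ref{nus}). Since $F_{k+2}=\emptyset$, every $Q_{j^1,\dots,j^k,p,i}\neq 0$, so the infinite system $a_{j^1,\dots,j^k,p,i}Q_{j^1,\dots,j^k,p,i}=b_{j^1,\dots,j^k,p,i}$ determines the coefficients uniquely as $a_{j^1,\dots,j^k,p,i}=b_{j^1,\dots,j^k,p,i}/Q_{j^1,\dots,j^k,p,i}$. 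This already yields uniqueness: two solutions differ by a formal series whose coefficients satisfy $a'_{j^1,\dots,j^k,p,i}Q_{j^1,\dots,j^k,p,i}=0$ and hence vanish because $Q_{j^1,\dots,j^k,p,i}\neq 0$.

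For solvability I would take the candidate $U$ with these coefficients and verify $U\in\mathcal H'^*$ through the growth bound of Proposition~\ref{gen-karakterizacija}. Since $f\in\mathcal H'^*$, its coefficients obey $|b_{j^1,\dots,j^k,p,i}|\leq C\,e^{\sum_{s}M(l^sj^s)+M(rp^{1/n})+M(h\mu_i^{1/q})}$ with the quantifier on $h,r,l^s$ dictated by the Roumieu/Beurling case, while (\ref{josj}) supplies $|Q_{j^1,\dots,j^k,p,i}|\geq C_\varepsilon\,e^{-\sum_{s}M(\varepsilon j^s)-M(\varepsilon p^{1/n})-M(\varepsilon\mu_i^{1/q})}$. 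Dividing, $|a_{j^1,\dots,j^k,p,i}|$ is bounded by an exponential of the same type with the two parameter sets added blockwise. The key step is to absorb each sum $M(l^sj^s)+M(\varepsilon j^s)$ (and likewise in $p$ and in $\mu_i$) into a single term, using (\ref{nejednakostM2}): $M(\alpha t)+M(\beta t)\leq 2M(\max(\alpha,\beta)t)\leq M(2^a\max(\alpha,\beta)t)+C$. In the Roumieu case $h,r,l^s$ and $\varepsilon$ are all universally quantified, so for any prescribed target parameters one chooses them small enough that $2^a\max(\cdot,\varepsilon)$ stays below the target; in the Beurling case one combines the single available $\varepsilon$ with the existing $h,r,l^s$ to exhibit one admissible parameter set. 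Either way $U\in\mathcal H'^*$ and $LU=f$ by construction.

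For hypoellipticity I would repeat the computation with the decay characterization of the test space $\mathcal H^*$. Assuming $f\in\mathcal H^*$, so $|b_{j^1,\dots,j^k,p,i}|\leq C\,e^{-(\sum_{s}M(l^sj^s)+M(rp^{1/n})+M(h\mu_i^{1/q}))}$, division by $Q_{j^1,\dots,j^k,p,i}$ together with (\ref{josj}) gives $|a_{j^1,\dots,j^k,p,i}|\leq (C/C_\varepsilon)\,e^{-\sum_{s}M(l^sj^s)+\sum_{s}M(\varepsilon j^s)-\cdots}$, where the dots denote the corresponding $p$ and $\mu_i$ terms. One then needs $M(l^sj^s)\geq M({l'}^sj^s)+M(\varepsilon j^s)-C$ for suitable positive ${l'}^s$, which again follows from (\ref{nejednakostM2}) as soon as $l^s\geq 2^a\max({l'}^s,\varepsilon)$, and correspondingly for $r$ and $h$. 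The quantifier bookkeeping is the mirror image of the dual case and yields $U\in\mathcal H^*$, i.e. (\ref{jedn}) is hypoelliptic.

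The main obstacle is not any single estimate but the uniform handling of the $k+2$ independent blocks together with the Roumieu/Beurling quantifier alternation: one must check that the single $\varepsilon$ from (\ref{josj}) can be played off against all block parameters simultaneously and that the additive constants produced when merging $M(\alpha t)+M(\beta t)$ in each block accumulate into one harmless constant. This is routine once the two-parameter merging inequality (\ref{nejednakostM2}) is in place, and it is exactly the mechanism already used in Proposition~\ref{thm123}(i).
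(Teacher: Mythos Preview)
Your proposal is correct and follows the same route as the paper: define the coefficients by $a_{j^1,\dots,j^k,p,i}=b_{j^1,\dots,j^k,p,i}/Q_{j^1,\dots,j^k,p,i}$, observe uniqueness from $Q\neq 0$, and deduce both the $\mathcal H'^*$-solvability and the $\mathcal H^*$-hypoellipticity by combining the coefficient estimates of Proposition~\ref{gen-karakterizacija} with the lower bound (\ref{josj}). The paper's own proof is a two-line statement that asserts these conclusions ``because of (\ref{josj})''; you have simply spelled out the merging of the $M(\cdot)$ terms via (\ref{nejednakostM2}) and the Roumieu/Beurling quantifier bookkeeping that the paper leaves implicit.
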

\begin{proof}
 The solution $U$ of (\ref{jedn}) with the coefficients
$$a_{j^1,...,j^k,p,i}=\frac{b_{j^1,...,j^k,p,i}}{a_1(\lambda^1_{j^1})^{h_1}+...+a_k(\lambda^k_{j^k})^{h_k}+b\nu_p^d+c\mu_i},
$$
$$(j^1,...j^k,p,i)\in\mathbb N_0^{k+2},
$$
is the unique one in $\mathcal H'(\prod_{s=1}^kX^s\times\mathbb R^n_+\times\mathbb R^m)$ because of (\ref{josj}). Also, one can easily see that the same condition (\ref{jedn}) implies the hypoellipticity of (\ref{jedn}).
\end{proof} 

Now we consider the case $F_{k+2}=E_{k+2}.$ 
If $F_{k+2}$  is a finite set, then the solvability and hypoellipticity of solutions are clear. The set of solution is infinite since for all indices in $F_{k+2}$ one can choose for $a_{j^1,...,j^k,p,i}$ arbitrary complex numbers.

\begin{cor} Let $f\in\mathcal H'^*(\prod_{s=1}^kX_s\times\mathbb R^n_+\times\mathbb R^m)$,  $F_{k+2}=E_{k+2}=\aleph_0.$ Let 
\begin{itemize}
\item[(g1)]
$$\forall h>0, \forall r>0, \forall \ell^s>0, s=1,...,k, \mbox{ respectively, } $$
$$\exists h>0, \exists r>0, \exists \ell^s>0, s=1,...,k, \exists C=\max\{C_{r,s,\ell^s}>0;  s=1,...,k\},
$$
$$ |a_{j^1,...,j^k,p,i}|\leq C e^{\sum_{s=1}^kM(\ell^s j^s)+M(rp^{1/n})+M(h\mu_i^{1/q})},\;
(j^1,...,j^k,p,i)\in E_{k+2},
$$
\end{itemize} 
respectively,
\begin{itemize}
\item[(g2)]
$$\forall h>0, \forall r>0, \forall \ell^s>0, s=1,...,k, \mbox{ respectively, }$$$$ \exists h>0, \exists r>0, \exists \ell^s>0, s=1,...,k,, \exists C=\max\{C_{r,s,\ell^s}>0;  s=1,...,k\},
$$
$$ |a_{j^1,...,j^s,p,i}|\leq C e^{-\sum_{s=1}^kM(\ell^s j^s)-M(rp^{1/n})-M(\mu_i^{1/q})},\;
(j^1,...,j^s,p,i)\in E_{k+2},
$$
\end{itemize}
Then   a solution exists if $(g1)$ holds.

 If $f\in\mathcal H^*(\prod_{s=1}^kX_s\times\mathbb R^n_+\times\mathbb R^m)$, then a solution is in the same space (hypoelliptic one),  if $(g2)$ holds.
\end{cor}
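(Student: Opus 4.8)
The plan is to transcribe the argument of Proposition~\ref{thm123}(ii) to the $(k+2)$-fold tensor setting, the only new ingredient being the sequence-space description of both $\mathcal H^*$ and $\mathcal H'^*$ furnished by Proposition~\ref{gen-karakterizacija}. Abbreviate the multi-index by $\mathbf j:=(j^1,\dots,j^k,p,i)$ and write $\sigma_{\mathbf j}:=a_1(\lambda^1_{j^1})^{h_1}+\dots+a_k(\lambda^k_{j^k})^{h_k}+b\nu_p^d+c\mu_i$ for the symbol in (\ref{nus}). I would split $\mathbb N_0^{k+2}=E_{k+2}\cup(\mathbb N_0^{k+2}\setminus E_{k+2})$. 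Because $F_{k+2}=E_{k+2}$, on the complement one has simultaneously $b_{\mathbf j}\neq 0$ and $\sigma_{\mathbf j}\neq 0$, so the coefficient of the prospective solution is forced to be $a_{\mathbf j}=b_{\mathbf j}/\sigma_{\mathbf j}$, whereas on $E_{k+2}$ the coefficient equation degenerates to $a_{\mathbf j}\cdot 0=0$ and $a_{\mathbf j}$ is free; on this free part I prescribe the values by (g1) for existence and by (g2) for the hypoelliptic statement.

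First I would control the forced coefficients on the complement. The standing non-Liouville hypothesis (\ref{josj}) gives a lower bound $|\sigma_{\mathbf j}|\ge C\,e^{-\sum_{s}M(\varepsilon j^s)-M(\varepsilon p^{1/n})-M(\varepsilon\mu_i^{1/q})}$, which upon inversion becomes a factor $e^{+\sum_s M(\varepsilon j^s)+M(\varepsilon p^{1/n})+M(\varepsilon\mu_i^{1/q})}$ multiplying the bound on $|b_{\mathbf j}|$ supplied by $f$ through Proposition~\ref{gen-karakterizacija}. For existence $f\in\mathcal H'^*$ means $|b_{\mathbf j}|$ is bounded by an increasing exponential of the same shape, and merging the two exponentials by the subadditivity and scaling relations of Lemma~\ref{koren}(a),(b) (which absorb $M(\alpha t)+M(\beta t)$ into a single $M(\gamma t)$ up to an additive constant) yields, on the complement, a bound of exactly the increasing type required for $\mathcal H'^*$. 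Combining this with (g1) on $E_{k+2}$ and taking the larger of the finitely many constants and the appropriate parameters, the full sequence $(a_{\mathbf j})$ meets the criterion of Proposition~\ref{gen-karakterizacija}, so that proposition certifies $U\in\mathcal H'^*(\prod_s X_s\times\mathbb R^n_+\times\mathbb R^m)$ and the solution exists.

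For hypoellipticity I would start instead from $f\in\mathcal H^*$, so that $|b_{\mathbf j}|$ decays like $e^{-\sum_s M(l^s j^s)-M(rp^{1/n})-M(h\mu_i^{1/q})}$, and rerun the division. Now the gain $e^{+\sum M(\varepsilon\cdot)}$ coming from (\ref{josj}) must not destroy the decay, and this is where the quantifier structure is used: in the Roumieu case (\ref{josj}) holds for every $\varepsilon$, so I choose $\varepsilon$ small relative to the fixed decay parameters of $b$; in the Beurling case (\ref{josj}) holds for a single $\varepsilon$, and I instead exploit the freedom to take the decay parameters of $b$ arbitrarily large. In either case Lemma~\ref{koren}(b), in the form $M(\varepsilon t)\le\tfrac12 M(\lambda t)+C$ for $\varepsilon=\lambda/2^{a}$, turns the residual exponent $M(\varepsilon t)-M(\lambda t)$ into a strictly negative $-\tfrac12 M(\lambda t)+C\le -M(\lambda t/2^{a})+C$, so $a_{\mathbf j}$ still decays on the complement. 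Together with (g2) on $E_{k+2}$ this gives a decaying bound of the form demanded by Proposition~\ref{gen-karakterizacija}, whence $U\in\mathcal H^*$ and the equation is hypoelliptic.

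The only genuinely delicate step is this quantitative balancing of associated functions at different scales: one must verify that inverting the symbol, which inflates the coefficients by $e^{+\sum M(\varepsilon\cdot)}$, can always be dominated either by shrinking $\varepsilon$ (Roumieu) or by enlarging the decay parameters of $f$ (Beurling), and Lemma~\ref{koren}(b) is what makes the residual exponent negative. Everything else is bookkeeping: respecting the order of the $\exists/\forall$ quantifiers in the two cases, keeping the maximum over the finitely many constants $C_{r,s,\ell^s}$, and invoking the order-independence of the $(k+2)$-fold sum guaranteed by Proposition~\ref{gen-karakterizacija}. As in Proposition~\ref{thm123}, once the Roumieu computation is written out the Beurling case follows by reversing the quantifiers, so I would present the Roumieu case in detail and only indicate the changes for Beurling.
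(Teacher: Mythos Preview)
Your approach is correct and is precisely what the paper intends: the paper's own ``proof'' is the single sentence ``The proofs of these assertions are now clear,'' referring back to Proposition~\ref{thm123} and the preceding corollary, so transcribing that argument to the $(k+2)$-fold setting via Proposition~\ref{gen-karakterizacija} is exactly the expected content. Your handling of the forced coefficients on $\mathbb N_0^{k+2}\setminus E_{k+2}$ through (\ref{josj}), the free coefficients on $E_{k+2}$ through (g1)/(g2), and the merging of associated-function exponents via Lemma~\ref{koren} matches the pattern of Proposition~\ref{thm123}(ii)(b); in fact you supply considerably more detail than the paper does.

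One small caveat worth flagging in your write-up: the corollary as stated does not list (\ref{josj}) among its hypotheses, yet your argument (and the paper's implicit one, by analogy with Proposition~\ref{thm123}(ii)) needs it on the complement of $E_{k+2}$ to control $|b_{\mathbf j}/\sigma_{\mathbf j}|$. This is a standing assumption carried over from the immediately preceding corollary and discussion, so you are right to invoke it, but it would be cleaner to make that dependence explicit rather than calling it a ``standing'' hypothesis.
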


The proofs of these assertions are now clear.
  
\section*{Acknowledgements}
S. Pilipovi\'c is supported by the Serbian Academy of Sciences and Arts, project F10.
\DJ or\dj e Vu\v ckovi\'c research was  supported by the Science Fund of the Republic of
Serbia, $\#$GRANT No 2727, \emph{Global and local analysis of operators and
distributions} - GOALS.

\end{document}